\newtheorem{assumption}{Assumption}
\pgfplotsset{compat=1.3}
\tikzset{
    png export/.style={
        external/system call/.add={}%
        {; test -f "\image.pdf" && convert -density 300 -transparent white "\image.pdf" "\image.png" || true}
    }
}
\tikzset{
    eps export/.style={
        external/system call/.add={}%
        {; test -f "\image.pdf" && pdftops -eps "\image.pdf" "\image.eps" || true}
    }
}
\tikzset{png export}
\tikzset{eps export}
\newlength\figureheight
\newlength\figurewidth
\newcommand{\includetikz}[3]{%
	\ifx\disabletikzimages\undefined%
		\setlength\figurewidth{#1}%
		\setlength\figureheight{#2}%
		\input{#3}%
	\else%
		\framebox[#1]{\parbox[c][#2]{#1}{%
			\centering $<$ tikz images disabled $>$\\%
		}}%
	\fi
}
\newcommand{\enabletikz}[0]{\let\disabletikzimages\undefined}
\title{Convergence analysis of an explicit splitting method for laser plasma interaction simulations}
\titlerunning{Triple splitting for laser plasma interactions -- convergence analysis}
\author{Georg Jansing \and Achim Sch\"adle}
\institute{
Georg Jansing
\at Mathematisches Insititut, Heinrich-Heine Universit\"at, Universitätsstraße 1, 40225 D\"usseldorf, Germany\\
\email{georg.jansing@hhu.de}
\and
Achim Sch\"adle
\at Mathematisches Insititut, Heinrich-Heine Universit\"at, Universitätsstraße 1, 40225 D\"usseldorf, Germany\\
\email{schaedle@hhu.de}
}
\newcommand{\mat}[1]{\left[ ~ \begin{matrix} #1 \end{matrix} ~ \right]}
\DeclareMathOperator{\dd}{d}
\DeclareMathOperator{\Id}{Id}
\DeclareMathOperator{\sinc}{sinc}
\DeclareMathOperator{\cosc}{cosc}
\newcommand{\vv}{\ensuremath{\mathbf{v}}\xspace}
\newcommand{\pp}{\ensuremath{\mathbf{p}}\xspace}
\newcommand{\EE}{\ensuremath{\mathbf{e}}\xspace}
\newcommand{\BB}{\ensuremath{\mathbf{b}}\xspace}
\newcommand{\EEn}{\EE_n}
\newcommand{\BBn}{\BB_n}
\newcommand{\ppn}{\pp_n}
\newcommand{\EEnp}{\EE_{n+1}}
\newcommand{\BBnp}{\BB_{n+1}}
\newcommand{\ppnp}{\pp_{n+1}}
\newcommand{\BBnph}{\BB_{n+\frac 1 2}}
\newcommand{\EEnplus}{\EE_n^+}
\newcommand{\EEnpminus}{\EE_{n+1}^-}
\newcommand{\GG}{\ensuremath{\mathbf{G}}}
\newcommand{\HH}{\ensuremath{\mathcal{H}}}
\newcommand{\CCB}{\ensuremath{\mathbf{C}_B}}
\newcommand{\CCE}{\ensuremath{\mathbf{C}_E}}
\newcommand{\OOmega}{\mathbf{\Omega}}
\renewcommand{\~}[1]{\widetilde{#1}}
\newcommand{\RR}{\ensuremath{\mathbb{R}}}
\newcommand{\CC}{\ensuremath{\mathbb{C}}}
\definecolor{blue}{rgb}{0,0,1}
\definecolor{red}{rgb}{0.8,0.3,0.08}
\definecolor{green}{rgb}{0.2,0.5,0.02}
\begin{document}
\maketitle
\begin{abstract}
Convergence of a triple splitting method originally proposed
in~\cite{TuePLH10,Lil10} for the solution of a simple
Vlasov-Maxwell system, that describes laser plasma interactions with
overdense plasmas, is analyzed. For classical explicit integrators it is
the large density parameter that would impose a restriction on the time
step size to make the integration stable. 
The triple splitting method contains an exponential integrator in
its central component and was specifically designed for systems that
describe laser plasma interactions and overcomes this restriction.  We
rigorously analyze a slightly generalized version of the original
method. This analysis enables us to identify modifications of the original scheme, 
such that a second order convergent scheme is obtained.
\end{abstract}
\keywords{exponential integrators, highly oscillatory problems, trigonometric integrators,
splitting methods}

\subclass{65P10}

\section{Introduction} \label{sec:intro}

We consider the numerical solution of a simplified Vlasov-Maxwell
system of equations, describing laser plasma interactions with an
overdense plasma. After discretizing in space for a fixed spatial
grid parameter $\Delta x$ a system of ordinary differential equations
is obtained. The situation we wish to consider now is slightly unusual
as it is the overdense plasma and not the space discretization that
gives rise to fast oscillations in the solution. And hence it would be
the plasma frequency that would impose a step size restriction in
explicit Runge-Kutta or multistep methods. To overcome the
restriction on the time step size due the plasma frequency a triple
splitting method with filter functions was introduced
by Liljo and T\"uckmantel, Pukhov, Liljo and Hochbruck in~\cite{Lil10,TuePLH10} 
for this model problem. An astute choice
of filter functions results in a method that shows excellent behavior
in numerical experiments. Numerical experiments in~\cite{TuePLH10}
indicate convergence of second order in the time step size $\tau$
independent of the plasma density $\rho$. A more detailed experiment, 
which is reported in Section~\ref{sec:numexp}, reveals that the method
from~\cite{TuePLH10} is not second order in $\tau$ independent of the
plasma density $\rho$, but is merely stable.

By our convergence analysis of the triple splitting we are able to
formulate conditions on the filter functions to obtain second order
convergence in $\tau$ independent of the plasma density $\rho$.  These
conditions can be fulfilled by slightly modifying the choice of the filter
functions originally proposed in~\cite{Lil10,TuePLH10}.

As the triple splitting is an explicit integrator the method certainly 
can not be expected to be convergent uniformly in $\Delta x \to 0$. 
Thus our aim here is to prove convergence independent of the large plasma density $\rho$ 
but not independent of the spatial discretization parameter $\Delta x$.  

In a nutshell the plan for the convergence proof is as follows: 
The triple splitting for the impulse of the plasma density $\pp$, the
electric field $\EE$ and the magnetic flux $\BB$ will be reformulated
as a two step method for $\EE$ only with some sort of ``natural''
filter. Perturbing the initial values this reformulation allows to
estimate the error in $\EE$ using a result from Hairer, Lubich and Wanner~\cite[Theorem
XIII.4.1]{HaiLW06}. We then show that the perturbation in the initial
values is small enough, such that by a stability argument convergence
for $\EE$ is obtained. The estimates for the magnetic flux $\BB$ and
the impulse $\pp$ are obtained by a judicious combination of ideas
borrowed from Grimm and Hochbruck~\cite{GriH06} with trigonometric identities.
The present paper is based on the first part of the PhD thesis~\cite{Jan15}.

\section{Physical problem and spatial discretization}\label{sec:introPhysics}

Consider the propagation of a short laser pulse in vacuum targeted
at a plasma around a thin foil. The electric field $\EE$ and the magnetic flux
$\BB$ describing the laser are governed by Maxwell's equations. In
this simple model the plasma is modeled as a fluid by the electron
number density $\rho$ (number of electrons per volume) and the
probability density function of the impulses of the electrons $\pp$.
The laser plasma interactions with an
overdense plasma ($\rho \gg 1$) and a linear response of the 
plasma to the laser is modeled by
\begin{subequations}
\label{eq:maxwellsEquationsLaserReflect}
\begin{align}
	\partial_t \pp & = \EE, & x\in B, t>0, \label{eq:maxwellsEquationsLaserReflectp}\\
	\partial_t \EE & = \nabla \times \BB - f^2 \rho \pp, &x\in B, t>0, \\
	\partial_t \BB & = -\nabla \times \EE, &x\in B, t>0.
\end{align}
\end{subequations}
Here $f = 2 \pi e$, where $e$, the electron charge, is a constant. $B$
is the computational domain, a box, containing the plasma and the support
of the initial values.  The vacuum
permittivity (electric constant) and permeability (magnetic constant)
are set to $1$. In our simplified model plasma only oscillates locally,
thus its impulses $\pp$ also oscillate, but the density $\rho$, remains
constant. There are two further
essential assumptions. We assume that the electrons move slowly, such that
relativistic effects can be neglected, i.e.  the velocity field of the
plasma $\vv$ is proportional to the impulse $\pp$.  Secondly we
neglect the magnetic Lorentz force $\vv \times \BB$. These rather
restrictive assumptions make the
model~\eqref{eq:maxwellsEquationsLaserReflect} linear.  A more
detailed derivation of the model may be found
in~\cite{TuePLH10,Tue13}.

Equation~\eqref{eq:maxwellsEquationsLaserReflect} has to be
supplemented with boundary conditions and initial values. The theory
developed below applies to the case of perfect magnetic conductor
(PMC), perfect electric conductor (PEC) or periodic boundary
conditions, which guarantee that the ``curl curl'' operator is
self-adjoint~\cite{HipKT12}.

As we only discuss the convergence of the semi-discrete problem in
the following, the solution of the spatially discretized equations will
again be denoted by $\pp$, $\EE$ and $\BB$. Discretizing in space with the
Yee scheme or curl-conforming finite elements we denote by $\CCE$ and
$\CCB$ discrete versions of the ``curl'' applied to $\EE$
and $\BB$ respectively. Note that these curl-operators are allowed to be
different and should be different. The electric field $\EE$ can conveniently be 
interpreted as a differential $1$-form, then $\CCE$ is a discrete version
of ``curl''. Whereas in this context $\BB$ has to be interpreted as differential
$2$-form, such that $\CCB$ is as discrete version of ``*curl*'', where
* is the Hodge operator~\cite{Hip02}.

The multiplication with $f^2\rho$ is discretized by a matrix
$\OOmega^2$. In case of the Yee scheme $\OOmega^2$ is a diagonal matrix. 
In case one uses curl-conforming finite elements,
$\OOmega^2$ is a positive semidefinite matrix and mass matrices arise
on the right hand
side of~\eqref{eq:maxwellsEquationsLaserReflect}. In what follows we will assume
that $\OOmega^2$ is a diagonal matrix with only one positive eigenvalue. 
Generalizations to a non-diagonal but symmetric positive semidefinite 
discretization $\OOmega^2$ of the multiplication 
operator will be discussed in Section~\ref{sec:multifrequency}.

If space is scaled to the wave number and time to the laser frequency
the spatially discretized equations are
\begin{subequations}
\label{eq:maxwellsEquationsDiscrete}
\begin{align}
	\label{eq:maxwellsEquationsDiscrete_p}
	\partial_t \pp & =  \EE, &t>0,\\
	\label{eq:maxwellsEquationsDiscrete_E}
	\partial_t \EE & =  \CCB \BB - \OOmega^2 \pp, &t>0, \\
	\label{eq:maxwellsEquationsDiscrete_B}
	\partial_t \BB & = -\CCE \EE,&t>0.
\end{align}
\end{subequations}
Assuming for the moment that $\rho$ vanishes,  
a right traveling pulse with width parameter $\sigma_0 = 10$ and wavelength $1$ 
solving~\eqref{eq:maxwellsEquationsLaserReflect} is given by
\begin{equation}
  \label{eq:laserPulse}
  \begin{aligned}
    &\EE_y = \BB_z = a_0 \exp\left(-\frac{(2\pi[(x - \bar{x}) - t])^2}{2
        \sigma_0^2}\right) \cos(2\pi[(x - \bar{x}) - t]), \\
    &\EE_x \equiv \EE_z \equiv \BB_x  \equiv \BB_y \equiv 0.
  \end{aligned}
\end{equation}
If we set $t = 0$, $\bar{x} = 10$ and $a_0 =1$ in~\eqref{eq:laserPulse} initial values 
for a pulse centered at $10$ with amplitude $1$ are obtained.

The plasma is located away from the initial
location of the pulse by choosing $\rho = 10^{8} $
for $x \in [20, 21]$ and $\rho = 0$ elsewhere. This 
leads to a total reflection of the laser pulse on the edge of the plasma. 
Figure \ref{fig:laserReflectSolution}
shows different snap shots of the simulation.

\begin{figure}[tbh]
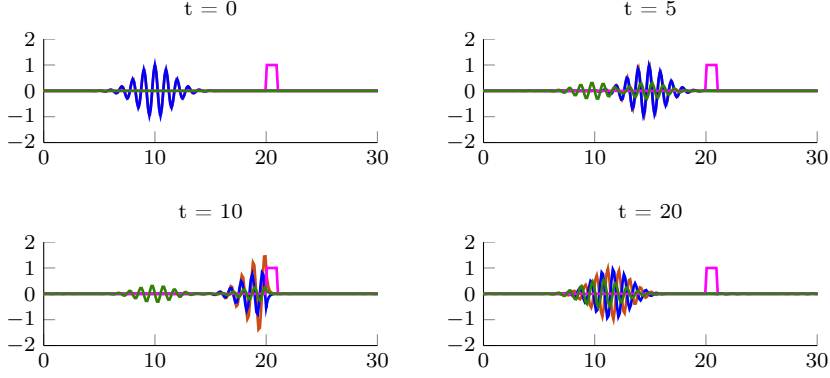

	\centering
	\begin{minipage}[b]{0.45\textwidth}
		\includetikz{0.8\textwidth}{0.25\textwidth}{laserReflect_T_0.tikz}
	\end{minipage} ~
	\begin{minipage}[b]{0.45\textwidth}
		\includetikz{0.8\textwidth}{0.25\textwidth}{laserReflect_T_5.tikz}
	\end{minipage} \\[1em]
	\begin{minipage}[b]{0.45\textwidth}
		\includetikz{0.8\textwidth}{0.25\textwidth}{laserReflect_T_10.tikz}
	\end{minipage} ~
	\begin{minipage}[b]{0.45\textwidth}
		\includetikz{0.8\textwidth}{0.25\textwidth}{laserReflect_T_20.tikz}
	\end{minipage} \\[1em]
	\caption{Solution of \eqref{eq:maxwellsEquationsLaserReflect}
          with the laser pulse from \eqref{eq:laserPulse} as initial value.
          Impulse $\pp_y$ (green), electric field $\EE_y$ (red), magnetic flux
          $\BB_z$ (blue) and electron density $\rho$ (magenta) scaled to one.
          First graphic ($t=0$): Initial data, second ($t=5$): propagation of
          the pulse in vacuum, third ($t=10$): total reflection at the foil,
          fourth ($t=20$): back propagation}
	\label{fig:laserReflectSolution}
      \end{figure}

\section{Numerical scheme and filter functions}
\label{sec:numericalScheme}

To solve the spatially discretized equations~\eqref{eq:maxwellsEquationsDiscrete} 
we use the triple splitting method proposed by Liljo and T\"uckmantel, Pukhov, Liljo and Hochbruck in \cite{Lil10,TuePLH10}.
To this end the right hand side is split into three terms
\begin{alignat*}{3}
  \partial_t \mat{ \pp \\ \EE \\ \BB }
  & =
    \mat{ 0 \\ 0 \\ -\CCE \EE }
  & + &
  \mat{ 0 \\ \CCB \BB \\ 0 }
  & + &
  \mat { 0 & 1 \\ - \OOmega^2 & 0 \\ & & 0 } \mat{ \pp \\ \EE \\ \BB }
  := f_1 + f_2 + f_3.
\end{alignat*}

The fully discrete scheme is a symmetric triple splitting obtained by
taking the exact flows of the split equations (i.e. with only one
$f_i$ as right hand side) as propagators. As already observed in
\cite{Lil10,TuePLH10} due to resonances this is not sufficient for convergence independent of
$\rho$.  To introduce filter functions is a widely used mean
to avoid resonance effects, see e.g.~\cite{Garcia-Archilla1998,HocL99,Hairer2001,GriH06,HaiLW06}.
We follow~\cite{Lil10,TuePLH10}, introduce filter functions symmetrically and obtain the following numerical scheme
\begin{subequations}
\label{eq:timeIntegrationScheme}
\begin{align}
 	\label{eq:maxwellSplittingSchemeWithFilterpTildeBnph}
	\BBnph & = \BBn - \tfrac \tau 2 \textcolor{blue}{\psi_B(\tfrac \tau 2 \OOmega)} \CCE \textcolor{green}{\phi_E(\tfrac \tau 2 \OOmega)} \EEn, \\
	\label{eq:maxwellSplittingSchemeWithFilterpTildeEnplus}
	\EEnplus & = \EEn + \tfrac \tau 2 \textcolor{green}{\psi_E(\tfrac \tau 2 \OOmega)} \CCB \textcolor{blue}{\phi_B(\tfrac \tau 2 \OOmega)} \BBnph, \\
	\label{eq:maxwellSplittingSchemeWithFilterpTildeOsci}
	\mat{\ppnp \\ \EEnpminus} & = \mat{
		\cos(\tau \OOmega)          & \tau \sinc(\tau \OOmega) \\
		-\OOmega \sin(\tau \OOmega) & \cos(\tau \OOmega) \\
	} \mat{ \ppn \\ \EEnplus }, \\
	\label{eq:maxwellSplittingSchemeWithFilterpTildeEnp}
	\EEnp & = \EEnpminus + \tfrac \tau 2 \textcolor{green}{\psi_E(\tfrac \tau 2 \OOmega)} \CCB \textcolor{blue}{\phi_B(\tfrac \tau 2 \OOmega)} \BBnph, \\
	\label{eq:maxwellSplittingSchemeWithFilterpTildeBnp}
	\BBnp & = \BBnph - \tfrac \tau 2 \textcolor{blue}{\psi_B(\tfrac \tau 2 \OOmega)} \CCE \textcolor{green}{\phi_E(\tfrac \tau 2 \OOmega)} \EEnp.
\end{align}
\end{subequations}
For $i \in \{ E, B \}$ we require $\psi_i$, $\phi_i$ to be even, analytic functions such that
$\psi_i(z), \phi_i(z) \to 1$ for $z \to 0$.

In the following section we state assumptions on the physical data and
the spatial discretization that are necessary for the convergence
proof.

\section{Assumptions}
\label{sec:assumptions}

The following assumptions are not too restrictive from a theoretical physics point of view.
They are fulfilled for example in the situation considered in~\cite{Lil10,TuePLH10}
simulating the reflection of a laser pulse by a plasma.

\begin{assumption}
  \label{ass:physicalAndDiscretizationAssumptions}
  We assume that 
  \begin{enumerate}
  \item[(i)] the product $\CCB \CCE =: -\GG = -\GG^T$ is symmetric, positive semidefinite,
  \item[(ii)] $\OOmega$ is a diagonal matrix given by
    \begin{equation}
      \label{eq:omegaMatrixStructure}
      \OOmega = \mat{ 0 & 0 \\ 0 & \~\omega \Id},
      \qquad
      \~\omega \gg 1
    \end{equation}
    and
  \item[(iii)] $~$\\[-7ex]
    \begin{equation}
      \label{eq:curlBounded}
      \| \CCE \| \leq C_c,
      \ \text{and} \
      \| \CCB \| \leq C_c,
      \quad
      \text{such that}
      \quad
      \| \GG \| \leq C_g := C_c^2
    \end{equation}
    for a constant $C_c$ independent of $\~\omega$.
  \end{enumerate}
\end{assumption}

The symmetry and negative semi-definiteness of $\GG$ comes quite
natural, provided that the continuous ``curl curl'' operator is
self-adjoint and positive semi-definite, which is the case for
appropriate boundary conditions such as perfect electric conductor
(PEC), perfect magnetic conductor (PMC) or periodic boundary
conditions, see~\cite{HipKT12}.
Condition~\eqref{eq:omegaMatrixStructure} implies that the matrix
$\OOmega$ has only one (large) non-zero eigenvalue $\~\omega > 0$.  In
our case it is given by the density parameter, i.e. $\~\omega = f
\sqrt{\rho}$. This is an essential restriction, which is only needed
in the proof of Theorem~\ref{thm:errorE}.  A modification of our
proof, that only requires $\OOmega$ to be symmetric positive
semi-definite, is given by Buchholz and Hochbruck in~\cite{BucH15} and 
will be discussed in Section~\ref{sec:multifrequency}. Estimates~\eqref{eq:curlBounded}
imply that we do not obtain error bounds uniformly in the spatial
discretization parameter, e.g. the mesh width. As mentioned already
in the introduction this would indeed be impossible for our
integration scheme since it reduces to the Störmer-Verlet method in
case of no material ($\rho \equiv 0$), which is known to be
conditionally stable only.
Assumption~\ref{ass:physicalAndDiscretizationAssumptions} is for
example satisfied if the curl operators with periodic boundary
conditions are discretized using a Yee-scheme and a step function
$\rho$ is evaluated point-wise.

Additionally we need some bounds on the initial data $\EE_0, \BB_0$ and $\pp_0$ to obtain
stable solutions as discussed in Section~\ref{sec:reformulation}:
\begin{assumption}
\label{ass:initialData}
We assume that 
\begin{equation}
  \label{eq:initialValueConditions}
  \| \OOmega \EE_0 \|^2 \leq \tfrac 2 3 H_0,
  \quad
  \| \CCB \BB_0 \|^2 \leq \tfrac 1 3 \min \lbrace 1, \tfrac{4}{C_4^2} \rbrace H_0,
  \quad
  \| \OOmega^2 \pp_0 \|^2 \leq \tfrac 1 3 H_0,
\end{equation}
\begin{equation}
  \label{eq:hamiltonianConservationCondition}
  -\left< \EE_0, \GG \EE_0 \right> = \| \CCE \EE_0 \|^2 \leq 2 H_0,
  \qquad
  \| \EE_0 \|^2 \leq H_0, 
\end{equation}
\begin{equation}
  \| \BB_0 \|^2 \leq H_0 \quad \mbox{and} \quad
  \label{eq:stabilityAssumptionInitialValue}
  \| \OOmega^2 \CCB \BB_0 \|^2 \leq H_0,
\end{equation}
for constants $H_0$ and $C_4$ independent of $\~\omega$.
\end{assumption}


A bound with respect to multiplication with $\OOmega$ implies, that the initial
data are sufficiently far away from the plasma, such that the product
of the field strength and the density is bounded independent of the
density. Bounds with respect to  multiplications with $\CCB$ or $\CCE$ 
maybe seen as smoothness conditions for the initial data.

\section{Main theorem}
\label{sec:filterConditionsAndconvergenceResults}
For our convergence result we need conditions on the filter functions, which we collect below.
As in~\cite{TuePLH10} we require
\begin{align}
  \label{eq:filterAssumptionsBequiv1}
  \phi_B \equiv \psi_B \equiv 1.
\end{align}
The following bounds for the filter functions are required for second order convergence of the 
scheme~\eqref{eq:timeIntegrationScheme}
\begin{subequations}
\label{eq:filterAssumptionsE}
\begin{align}
  \label{eq:filterAssumptionPsi}
  | (\cos(z) + 1)\psi_E(\tfrac 1 2 z) |
  & \leq C_1 \sinc^2(\tfrac 1 2 z), \\
  \label{eq:filterAssumptionPhi}
  | \phi_E(\tfrac 1 2 z) |
  & \leq C_2 |\sinc(\tfrac 1 2 z)|, \\
  \label{eq:filterAssumptionPsiTimesPhi}
  | (\cos(z) + 1) \psi_E(\tfrac 1 2 z) \phi_E(\tfrac 1 2 z) |
  & \leq C_3 |\sinc(z)|, \\
  \label{eq:filterAssumptionPsi_Sinc}
  | (\cos(z) + 1)\psi_E(\tfrac 1 2 z) |
  & \leq C_4 |\sinc(z)|, \\
  \label{eq:filterAssumptionSincMinusPsi}
  \left| \sinc(z) - \tfrac 1 2 (\cos(z) + 1) \psi_E(\tfrac 1 2 z) \right|
  & \leq C_5 z^2 | \sinc(z) |, \\
  \label{eq:filterAssumptionSincMinusPhi}
  | \sinc(z) - \phi_E(\tfrac 1 2 z) |
  & \leq C_6 | z \sin(\tfrac 1 2 z) |, \\
  \label{eq:filterAssumptionPsiBounded}
  | \psi_E(z) |
  & \leq C_7
  \intertext{and}
  \label{eq:filterAssumptionSinc2MinusSincPhi}
  | \sinc^2(\tfrac 1 2 z) - \sinc(z) \phi_E(\tfrac 1 2 z) |
  & \leq C_8 \sin^2(\tfrac 1 2 z).
\end{align}
\end{subequations}
With these conditions we obtain our main result
\begin{theorem}
  \label{thm:errorEstimateFinal}
  Let $\CCB, \CCE$ and $\OOmega$ be such that Assumption
  \ref{ass:physicalAndDiscretizationAssumptions} is fulfilled.  Consider
  the numerical solution of the
  system~\eqref{eq:maxwellsEquationsDiscrete} by the splitting
  method~\eqref{eq:timeIntegrationScheme} with time step size $\tau$
  satisfying $\tau \leq \tau_0$, for sufficiently small $\tau_0$
  independent of $\~\omega$ with $\tau \~\omega \geq c_0 > 0$.  If the
  initial values satisfy conditions \eqref{eq:initialValueConditions} to
  \eqref{eq:stabilityAssumptionInitialValue} with a constant $H_0$
  independent of $\~\omega$ and the filter functions
  satisfy~\eqref{eq:filterAssumptionPsi}-\eqref{eq:filterAssumptionSinc2MinusSincPhi}
  then for $t_n := t_0 + n \tau \leq T$ we obtain the following second
  order estimates for the errors
  \begin{equation}
    \label{eq:errorBoundNew}
    \| \ppn - \pp(t_n) \| \leq C \tau^2,
    \quad
    \| \EEn - \EE(t_n) \| \leq C \tau^2,
    \quad
    \| \BBn - \BB(t_n) \| \leq C \tau^2.
  \end{equation}
  The constant $C$ is independent of ~$\~\omega$, $\tau$, $n$ or derivatives of the solution, but depends on $(T - t_0)$ and the constants $C_1, ..., C_8$ and $C_c$ in \eqref{eq:curlBounded} and $H_0$.
\end{theorem}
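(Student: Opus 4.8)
The plan is the one announced in the introduction: collapse the three-field splitting~\eqref{eq:timeIntegrationScheme} to a scalar two-step recursion for $\EE$ of trigonometric-integrator type, invoke the error analysis of Hairer, Lubich and Wanner for such methods, and then carry the resulting bound over to $\BB$ and $\pp$.

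First I would use $\phi_B\equiv\psi_B\equiv 1$ and eliminate $\pp$ and $\BB$. Abbreviating $K=\tfrac\tau2\psi_E(\tfrac\tau2\OOmega)\CCB$ and $L=\tfrac\tau2\CCE\phi_E(\tfrac\tau2\OOmega)$, the two outer half-kicks of~\eqref{eq:timeIntegrationScheme} give $\EEnplus=\EEn+K\BBn-KL\EEn$ and $\EE_n^-=\EEn-K\BBn-KL\EEn$, so that $\EEnplus+\EE_n^-=2(\Id-KL)\EEn$, while the central block~\eqref{eq:maxwellSplittingSchemeWithFilterpTildeOsci} is the exact rotation of the oscillator $\partial_t\pp=\EE$, $\partial_t\EE=-\OOmega^2\pp$. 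Eliminating $\ppn$ between two consecutive rotations with the addition theorems for $\cos(\tau\OOmega)$ and $\sin(\tau\OOmega)$ yields the staggered identity $\EEnpminus+\EE_{n-1}^+=\cos(\tau\OOmega)(\EEnplus+\EE_n^-)$; feeding in the elementary recursion $2K\BBnp=2K\BBn-2KL(\EEn+\EEnp)$ for the magnetic half-kicks and using $\CCB\CCE=-\GG$ turns this into the \emph{exact} two-step method for the field produced by the scheme,
\[
\EEnp-2\cos(\tau\OOmega)\EEn+\EE_{n-1}
=-\tfrac{\tau^2}2\bigl(\Id+\cos(\tau\OOmega)\bigr)\psi_E(\tfrac\tau2\OOmega)\,\CCB\CCE\,\phi_E(\tfrac\tau2\OOmega)\,\EEn .
\]
This is precisely a trigonometric integrator for $\partial_{tt}\EE=-(\OOmega^2-\GG)\EE$, with fast part $\OOmega$, bounded slow force $\GG$ by Assumption~\ref{ass:physicalAndDiscretizationAssumptions}(iii), and natural filter functions whose eigenvalues at $z=\tau\~\omega$ are $\tfrac12(\cos z+1)\psi_E(\tfrac z2)$ (velocity filter) and $\phi_E(\tfrac z2)$ (position filter). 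Conditions~\eqref{eq:filterAssumptionPsi}--\eqref{eq:filterAssumptionSinc2MinusSincPhi} are exactly the hypotheses that \cite[Theorem~XIII.4.1]{HaiLW06} places on these natural filters: \eqref{eq:filterAssumptionPsi} and \eqref{eq:filterAssumptionPsi_Sinc} bound the velocity filter, \eqref{eq:filterAssumptionPhi} the position filter, \eqref{eq:filterAssumptionSincMinusPsi} and \eqref{eq:filterAssumptionSincMinusPhi} encode second-order consistency, \eqref{eq:filterAssumptionPsiTimesPhi} and \eqref{eq:filterAssumptionSinc2MinusSincPhi} control the products governing the resonance terms, and \eqref{eq:filterAssumptionPsiBounded} bounds $\psi_E$ itself.

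Next I would apply that theorem to obtain $\|\EEn-\EE(t_n)\|\le C\tau^2$. The only gap is that the scheme fixes its own starting pair $(\EE_0,\EE_1)$: $\EE_0$ is exact, but $\EE_1$ is the output of one full step and corresponds to a slightly perturbed initial velocity of the trigonometric integrator. I would bound this perturbation using Assumption~\ref{ass:initialData}, which makes $\OOmega\EE_0$, $\OOmega^2\pp_0$ and $\CCB\BB_0$ bounded independently of $\~\omega$, so that the starting error is $O(\tau^2)$ uniformly in $\~\omega$, and then invoke the stability of the linear two-step recursion (its propagation matrix is bounded uniformly in $n$ and in $\~\omega$, since $|\cos(\tau\~\omega)|\le 1$ and the perturbation is $O(\tau^2)$) to propagate the bound over $n\le (T-t_0)/\tau$ steps. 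Because the system is linear with bounded $\GG$, the quantities that \cite[Theorem~XIII.4.1]{HaiLW06} would otherwise estimate through solution derivatives are here controlled by the data bounds of Assumption~\ref{ass:initialData}, which is why the final constant is independent of those derivatives. This is also the step where the single-frequency structure~\eqref{eq:omegaMatrixStructure} enters, as it makes the non-resonance estimates hold with a constant independent of $\~\omega$; it is the only place Assumption~\ref{ass:physicalAndDiscretizationAssumptions}(ii) is used, and the requirements $\tau\le\tau_0$ and $\tau\~\omega\ge c_0$ play the roles of the small-step and numerical non-resonance conditions.

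Finally, with $\EE$ under control, I would recover $\BB$ and $\pp$ following ideas of Grimm and Hochbruck~\cite{GriH06}. Telescoping the magnetic half-kicks gives $\BBn=\BB_0-\sum_k L(\EE_k+\EE_{k+1})$, which I would compare with $\BB(t_n)=\BB_0-\int_{t_0}^{t_n}\CCE\EE\,\dd s$; the mismatch between the filtered discrete curl $\CCE\phi_E(\tfrac\tau2\OOmega)$ and the exact integral is precisely what \eqref{eq:filterAssumptionSincMinusPhi} and \eqref{eq:filterAssumptionSinc2MinusSincPhi} estimate, so summation by parts together with the $\EE$-bound yields $\|\BBn-\BB(t_n)\|\le C\tau^2$. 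For $\pp$ I would argue analogously from the exact rotation $\ppnp=\cos(\tau\OOmega)\ppn+\tau\sinc(\tau\OOmega)\EEnplus$, using trigonometric identities to match the discrete sum against $\pp(t_n)=\pp_0+\int_{t_0}^{t_n}\EE\,\dd s$ and the bound on $\|\OOmega^2\pp_0\|$ to keep the plasma contribution bounded independently of $\~\omega$. I expect the reformulation bookkeeping of the second paragraph, and above all the uniform-in-$\~\omega$ stability and initial-value-perturbation argument of the third paragraph, to be the main obstacles, since every constant there must be established without seeing the large frequency $\~\omega$.
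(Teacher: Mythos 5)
Your overall architecture coincides with the paper's: reformulate the triple splitting as the two-step recursion \eqref{eq:maxwellSplittingSchemeMultistepFormulation} for $\EE$ alone, invoke \cite[Theorem XIII.4.1]{HaiLW06}, and then recover $\BB$ and $\pp$ by telescoping, trigonometric summation identities and summation by parts. The reformulation in your second paragraph and the plan for $\BB$ and $\pp$ in your last paragraph are essentially the paper's Sections~\ref{sec:reformulation}, \ref{sec:estimatingB} and \ref{sec:estimatingp} (though the latter two require substantially more work than your sketch suggests: variation-of-constants expansions, the auxiliary functions $(\sinc(z)-\phi_E(\tfrac12 z))/z^i$, and an integration by parts inside the convolution term are all needed to avoid losing a factor of $n$ in the sums).

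The genuine gap is in your third paragraph. You treat the mismatch in the first step as a perturbation of the discrete starting pair $(\EE_0,\EE_1)$ and propagate it using ``stability of the linear two-step recursion'', asserting that its propagation matrix is bounded uniformly in $n$ because $|\cos(\tau\~\omega)|\le 1$. That assertion is false: the companion matrix of $x_{n+1}-2\cos\theta\,x_n+x_{n-1}=0$ has $n$-th powers of size $|\sin(n\theta)/\sin\theta|$, which grows linearly in $n$ on the $\OOmega=0$ block ($\theta=0$) and near the resonances $\tau\~\omega\in\pi\mathbb{Z}$ that the whole analysis is designed to handle. With an $O(\tau^2)$ starting error and an $O(n)$ propagator you only get $O(\tau)$. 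Moreover, the data bounds you cite ($\OOmega\EE_0$, $\OOmega^2\pp_0$, $\CCB\BB_0$) do not even give a starting error that is $O(\tau^2)$ uniformly in $\~\omega$: the relevant quantity is $\tau\bigl(\sinc(\tau\OOmega)-\tfrac12(\cos(\tau\OOmega)+\Id)\psi_E(\tfrac\tau2\OOmega)\bigr)\CCB\BB_0$, and condition \eqref{eq:filterAssumptionSincMinusPsi} converts this into $O(\tau^3)$ only at the price of the extra weighted bound $\|\OOmega^2\CCB\BB_0\|^2\le H_0$ from \eqref{eq:stabilityAssumptionInitialValue}, which you never invoke. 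The paper sidesteps the discrete propagation entirely: it perturbs the \emph{exact} initial velocity to $\dot\EE_0{}' = \chi(\tau\OOmega)\CCB\BB_0-\OOmega^2\pp_0$ with $\chi(z)=\tfrac12(\cos z+1)\psi_E(\tfrac12 z)/\sinc(z)$, so that the scheme is \emph{exactly} the HLW scheme for the perturbed problem (there is no starting error to propagate), and then compares the two exact solutions via the variation-of-constants formula, where the velocity perturbation enters through $(t-t_0)\sinc((t-t_0)\mathbf{B})$ and the bound $\|(\Id-\chi(\tau\OOmega))\CCB\BB_0\|\le C_5\tau^2\|\OOmega^2\CCB\BB_0\|$ delivers the uniform $O(\tau^2)$. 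To repair your version you would need both the $\OOmega^2$-weighted data bound (giving an $O(\tau^3)$ starting error) and the correct $O(n)$ growth of the discrete propagator, so that the product is again $O(n\tau^3)=O(\tau^2)$.
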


The assumption $\tau \~\omega \geq c_0 > 0$ is the interesting case and is no restriction as 
otherwise we are in the case of classical convergence analysis.
The proof is given in Section \ref{sec:proof}.

\paragraph{Choice of the filter functions}
Tückmantel et. al.~\cite{TuePLH10} propose the choice
\begin{equation}
	\label{eq:filterChoiceJalo}
	\phi_E = \psi_E = z \mapsto \sinc(z),
	\qquad
	\phi_B \equiv \psi_B \equiv 1,
\end{equation}
which satisfies conditions~\eqref{eq:filterAssumptionPhi}
to~\eqref{eq:filterAssumptionSinc2MinusSincPhi}.  It does not obey
condition~\eqref{eq:filterAssumptionPsi} but only the weaker estimate
$| (\cos(z) + 1)\psi_E(\tfrac 1 2 z) | \leq C_0 \sinc(\tfrac 1 2
z)$. Detailed numerical tests in Section~\ref{sec:numexp} for this
choice reveal sharp resonances a even multiples of $\pi/\omega$, which
where not observed in~\cite{TuePLH10}.

We propose the new choice
\begin{equation}
  \label{eq:filterChoiceNew}
  \phi_E = z \mapsto \sinc(z),
  \qquad
  \psi_E = z \mapsto \sinc^2(z),
  \qquad
  \phi_B \equiv \psi_B \equiv 1,
\end{equation}
which also statisfies the first filter condition \eqref{eq:filterAssumptionPsi} 
and thus by Theorem~\ref{thm:errorEstimateFinal} results in second order error bounds.
The detailed proof that the filter functions~\eqref{eq:filterChoiceNew} meet all of 
conditions~\eqref{eq:filterAssumptionsE} can be found in \cite[Section 4.12]{Jan15}.

These two choices of filter functions are used on the numerical experiment 
in Section~\ref{sec:numexp}. Figure~\ref{fig:ts} there shows the error
for the scheme~\eqref{eq:timeIntegrationScheme}, without filter (None), with the filter
choice \eqref{eq:filterChoiceNew} (New), which yields a
second order scheme uniformly in $\omega$, and with the filter choice~\eqref{eq:filterChoiceJalo} (Orig)
which violates \eqref{eq:filterAssumptionPsi} and shows 
sharp resonances and a breakdown of the method if $\tau\~\omega$ is
close to even multiples of $\pi$.

\begin{remark}
  \label{rem.1}
  Theorem 4.19 of~\cite{Jan15} claims that for the filter
  choice~\eqref{eq:filterChoiceJalo} one obtains
  convergence of order one in $\tau$ independent of $\omega$.
  This however is shown to be wrong by our numerical tests in Section~\ref{sec:numexp}. 
  Theorem 4.19 of~\cite{Jan15} was derived in an analogous way to the second order
  result presented below. It is based on a supplementary first order
  convergence result for the two step method~\eqref{eq:maxwellSplittingSchemeMultistepFormulation} 
  given in~\cite[Theorem XIII.4.1]{HaiLW06} for the weakened filter assumption  
  $| (\cos(z) + 1)\psi_E(\tfrac 1 2 z) | \leq C_0 \sinc(\tfrac 1 2 z)$ 
  replacing~\eqref{eq:filterAssumptionPsi}, see also Remark~\ref{rem.2} below. 
\end{remark}

\section{Proof of Theorem~\ref{thm:errorEstimateFinal}}
\label{sec:proof}
The proof is divided into four steps. First we reformulate the
scheme~\eqref{eq:timeIntegrationScheme} as a two step method for the
electric field $\EE$ only. With this reformulation we can apply an already
known error estimate to control the error in the electric field after
modifying the intitial values. Based on the error bound for $\EE$
error bounds for $\BB$ and $\pp$ are obtained. A more detailed proof
can be found in~\cite[Chapter 4]{Jan15}.

\subsection{Reformulation}
\label{sec:reformulation}

From equation~\eqref{eq:maxwellsEquationsDiscrete} one obtains an equation for electric field
\begin{subnumcases}{\label{eq:modFourierModelEqMaxwell}}
	\label{eq:modFourierModelEqMaxwellODE}%
	\partial_{tt} \EE(t) = -\OOmega^2 \EE(t) + \GG \EE(t), \\
	\label{eq:modFourierModelEqMaxwellIV}%
	\EE(t_0) = \EE_0,
	\quad
	\partial_t \EE(t_0) = \CCB \BB(t_0) - \OOmega^2 \pp(t_0) := \dot \EE_0
\end{subnumcases}
with Hamiltonian
\begin{equation}
	\label{eq:maxwellHamiltonFunctionEDiscrete}
	\HH(\EE, \mathbf{f})
	= \tfrac 1 2 \| \mathbf{f} \|^2 + \tfrac 1 2 \| \OOmega \EE \|^2 - \tfrac 1 2 \left< \EE, \GG \EE \right>
	= \tfrac 1 2 \| \mathbf{f} \|^2 + \tfrac 1 2 \| \OOmega \EE \|^2 + \tfrac 1 2 \| \CCE \EE \|^2.
\end{equation}
From Assumption \ref{ass:initialData} we deduce the stability estimates
\begin{align}
	\label{eq:HamiltonBounded}
	\HH(\EE(t), \partial_t \EE(t))
	& \leq 2 H_0, \\
	\label{eq:EFieldBounded}
	\| \EE(t) \|
	& \leq (1 + 2 (T - t_0)) \sqrt{H_0}, \\
	\label{eq:BFieldBounded}
	\| \BB(t) \|
	& \leq (1 + 2 (T - t_0)) \sqrt{H_0}
\end{align}
for $t_0 \leq t \leq T$. The latter two can be obtained by expressing $\EE(t)$ and $\BB(t)$ with the fundamental theorem of calculus and exploiting that the integrands $\partial_t \EE$ and $-\CCE\EE$ are both bounded by the Hamiltonian.
The variation of constants formula gives the following representation of the solution $\EE$ of~\eqref{eq:modFourierModelEqMaxwell} starting from $t'$ with initial data $\EE(t')$ and $\partial_t \EE(t')$
\begin{align}
  \notag
  \EE(t)
  = &\cos((t - t') \OOmega) \EE(t') + (t - t') \sinc((t - t') \OOmega) \partial_t \EE(t') \\
  \label{eq:EfieldFormula}
  &  + (t - t') \int_0^1 (t - t') (1 - \xi) \sinc((t - t') (1 - \xi) \OOmega) \GG \EE(t' (1 - \xi) + t \xi) \dd\!\xi \\
  \intertext{and similar for $\partial_t \EE$}
  \notag
  \partial_t \EE(t)
  =& -\OOmega \sin((t - t') \OOmega) \EE(t') + \cos((t - t') \OOmega) \partial_t \EE(t') \\
  \label{eq:dotEfieldFormula}
  & + (t - t') \int_0^1 \cos((t - t') (1 - \xi) \OOmega) \GG \EE(t' (1 - \xi) + t \xi) \dd\!\xi.
\end{align}
For an $\EE$-only formulation for the numerical scheme, we use~\eqref{eq:maxwellSplittingSchemeWithFilterpTildeBnph},
\eqref{eq:maxwellSplittingSchemeWithFilterpTildeEnplus} and~\eqref{eq:maxwellSplittingSchemeWithFilterpTildeOsci} to eliminate $\BBnp, \EEnplus$ and  $\EEnpminus$ from~\eqref{eq:maxwellSplittingSchemeWithFilterpTildeEnp}
\begin{align*}
	\EEnp
	& =
	-\OOmega \sin(\tau \OOmega) \ppn + \cos(\tau \OOmega) \EEn
	+ \tau \tfrac 1 2 (\cos(\tau \OOmega) + \Id) \psi_E(\tfrac \tau 2 \OOmega) \CCB \phi_B(\tfrac \tau 2 \OOmega) \BBn \\
	& \qquad \qquad +
	\tau^2 \tfrac 1 4 (\cos(\tau \OOmega) + \Id) \psi_E(\tfrac \tau 2 \OOmega) \CCB \phi_B(\tfrac \tau 2 \OOmega) \psi_B(\tfrac \tau 2 \OOmega) \CCE \phi_E(\tfrac \tau 2 \OOmega) \EEn.
\end{align*}
The filter functions $\psi_i,\phi_i$, $i \in \{E,B\}$ are even and 
hence the matrix-functions that are applied to $\ppn$ and $\BBn$ are uneven as functions of $\tau$, 
whereas the matrix-functions that are applied to $\EEn$ are even in $\tau$. This observation results
in the two step formulation
\begin{align*}
	&\EEnp - 2 \cos(\tau \OOmega) \EEn + \EE_{n-1} \\
	&\qquad \qquad =
	\tau^2 \tfrac 1 2 (\cos(\tau \OOmega) + \Id) \psi_E(\tfrac \tau 2 \OOmega) \CCB \phi_B(\tfrac \tau 2 \OOmega) \psi_B(\tfrac \tau 2 \OOmega) \CCE \phi_E(\tfrac \tau 2 \OOmega) \EEn
\end{align*}
To obtain a formulation close to the two step form of~\cite[Chapter XIII]{HaiLW06} we 
use~\eqref{eq:filterAssumptionsBequiv1} and get rid of the filter functions 
``between'' the two curl operators 
\begin{align}
  \EEnp - 2 \cos(\tau \OOmega) \EEn + \EE_{n-1}
  \label{eq:maxwellSplittingSchemeMultistepFormulation}
  &  =
  \tau^2 \tfrac 1 2 (\cos(\tau \OOmega) + \Id) \psi_E(\tfrac \tau 2 \OOmega) \GG \phi_E(\tfrac \tau 2 \OOmega) \EEn.
\end{align}
Again with~\eqref{eq:filterAssumptionsBequiv1} the equations for \BB and \pp of the numerical scheme finally read
\begin{align}
  \notag
  \ppnp & = \cos(\tau \OOmega) \ppn + \tau \sinc(\tau \OOmega) \EEn
  + \tau^2 \tfrac 1 2 \sinc(\tau \OOmega) \psi_E(\tfrac \tau 2 \OOmega) \CCB \BBn \\
  \label{eq:maxwellSplittingSchemeWithFilterFullStep_p}
  & \qquad - \tau^3 \tfrac 1 4 \sinc(\tau \OOmega) \psi_E(\tfrac \tau 2 \OOmega) \GG
  \phi_E(\tfrac \tau 2 \OOmega)   \EEn, \\
  \intertext{and}
  \label{eq:maxwellSplittingOneStep_B}
  \BBnp & = \BBn - \tau \tfrac 1 2 \CCE \phi_E(\tfrac \tau 2 \OOmega) \left( \EEn + \EEnp \right)\!.
\end{align}

\subsection{Error in the electric field}
\label{sec:estimatingE}
We want to apply Theorem 4.1 \cite[Chapter XIII]{HaiLW06} to estimate the
error in the electric field. Unfortunately this requires a distinct first time
step, that our scheme \eqref{eq:timeIntegrationScheme} does not
fulfill. To circumvent this problem we perturb the initial
value for the derivative of the \EE-field, which then yields the correct
scheme. For an estimate with the original initial values we
use a stability estimate for the exact solution.

The following theorem restates~\cite[Theorem XIII.4.1]{HaiLW06} adapted to the situation at hand.
\begin{theorem}
  \label{thm:hlwAdoption}
  Let $\OOmega$ and $\GG$ be as in Assumption~\ref{ass:physicalAndDiscretizationAssumptions}.
  Consider the solution of equation~\eqref{eq:modFourierModelEqMaxwellODE} for the electric field 
  by method~\eqref{eq:maxwellSplittingSchemeMultistepFormulation} with step size $\tau \leq \tau_0$ 
  for a sufficiently small $\tau_0$ independent of $\~\omega$ with $\tau \~\omega \geq c_0 > 0$.
  We denote the exact solution by $\EE\,\!'(t)$, and the numerical solution by $\EE_n\,\!'$.
  The first time step is computed via
  \begin{equation}
    \label{eq:distinctFirstTimeStep}
    \EE_1\,\!' = \cos(\tau \OOmega) \EE_0\,\!' + \tau \sinc(\tau \OOmega) \dot{\EE}_0\,\!' 
    + \tau^2 \tfrac 1 4 (\cos(\tau \OOmega) + \Id) \psi_E(\tfrac \tau 2 \OOmega) \GG \phi_E(\tfrac \tau 2 \OOmega) \EE_0\,\!',
  \end{equation}
  where 
  \begin{equation}
    \label{eq:modifiedInitialValues}
    \EE_0\,\!' := \EE_0,
    \ \text{ and }\
    \dot{\EE}_0\,\!' := \chi(\tau \OOmega) \CCB \BB_0 - \OOmega^2 \pp_0
  \end{equation}
  with
  \begin{equation}
    \label{eq:modifiedInitialValueFirstStepChi}
    \chi(z)
    := \tfrac 1 2 \frac{\cos(z) + 1}{\sinc(z)} \psi_E(\tfrac 1 2 z)
  \end{equation}
  and $\EE_0$ and $\BB_0$ and $\pp_0$ are such that conditions~\eqref{eq:initialValueConditions} hold true.
  
  If conditions \eqref{eq:filterAssumptionPsi}, \eqref{eq:filterAssumptionPhi}, \eqref{eq:filterAssumptionPsiTimesPhi} 
  and \eqref{eq:filterAssumptionPsi_Sinc} are satisfied with constants independent of $\~\omega$ 
  for the even entire filter functions 
  $\psi_E, \phi_E : \RR_{\geq 0} \to \RR$ with $\psi_E(0) = \phi_E(0) = 1$
  we obtain
  \begin{equation*}
    \| \EE_n\,\!' - \EE\,\!'(t_n) \| \leq C \tau^2
    \qquad
    \textup{for}
    \qquad
    t_n := t_0 + n \tau \leq T
  \end{equation*}
  with a constant $C$ independent of $n$, $\tau$ and $\tilde \omega$ but depending on the $(T - t_0)$ and the constants $H_0$, $C_g$ and $C_1, ..., C_3$.
\end{theorem}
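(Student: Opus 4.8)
The plan is to recognize Theorem~\ref{thm:hlwAdoption} as a direct instance of the abstract convergence result~\cite[Theorem XIII.4.1]{HaiLW06} for trigonometric two-step integrators, and to spend the actual work on verifying that its hypotheses hold with constants independent of $\~\omega$. First I would cast the problem in the abstract form: equation~\eqref{eq:modFourierModelEqMaxwellODE} is of the type $\ddot q + \OOmega^2 q = g(q)$ with the \emph{linear} right-hand side $g(q) = \GG q$, and the recursion~\eqref{eq:maxwellSplittingSchemeMultistepFormulation} is precisely the two-step method $q_{n+1} - 2\cos(\tau\OOmega)q_n + q_{n-1} = \tau^2 \Psi\, g(\Phi q_n)$ with composite filters
\[
  \Psi = \tfrac 1 2 (\cos(\tau\OOmega) + \Id)\,\psi_E(\tfrac\tau 2\OOmega),
  \qquad
  \Phi = \phi_E(\tfrac\tau 2\OOmega).
\]
Since $g$ is linear and $\|\GG\| \leq C_g$ by~\eqref{eq:curlBounded}, the smoothness and boundedness requirements on the nonlinearity in~\cite{HaiLW06} are trivially met with constants depending only on $C_g$.

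Next I would match the starting value. The one-step trigonometric integrator underlying the two-step form produces, from data $(\EE_0', \dot\EE_0')$, exactly $\EE_1' = \cos(\tau\OOmega)\EE_0' + \tau\sinc(\tau\OOmega)\dot\EE_0' + \tfrac 1 2\tau^2\Psi\,g(\Phi\EE_0')$, and a one-line computation shows this coincides with~\eqref{eq:distinctFirstTimeStep}. Hence, \emph{with the modified initial data}~\eqref{eq:modifiedInitialValues}, the scheme is genuinely the integrator analyzed in~\cite{HaiLW06}, and no further reconciliation of the first step is needed inside this theorem; the reconciliation with the true scheme~\eqref{eq:timeIntegrationScheme} is deferred to the following steps of the overall proof.

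The substantive content is to check the two quantitative hypotheses. The first is a finite-energy bound, uniform in $\~\omega$. I would show $\tfrac 1 2\|\dot\EE_0'\|^2 + \tfrac 1 2\|\OOmega\EE_0'\|^2 \leq H_0$. Here $\|\OOmega\EE_0'\|^2 = \|\OOmega\EE_0\|^2 \leq \tfrac 2 3 H_0$ by~\eqref{eq:initialValueConditions}; and writing $\chi(z) = \tfrac 1 2 (\cos z + 1)\psi_E(\tfrac 1 2 z)/\sinc(z)$, condition~\eqref{eq:filterAssumptionPsi_Sinc} gives $|\chi(z)| \leq \tfrac 1 2 C_4$, so that $\|\chi(\tau\OOmega)\CCB\BB_0\| \leq \tfrac{C_4}2\|\CCB\BB_0\| \leq \sqrt{H_0/3}$ by the tailored bound on $\|\CCB\BB_0\|$ in~\eqref{eq:initialValueConditions}, while $\|\OOmega^2\pp_0\| \leq \sqrt{H_0/3}$; together these give $\|\dot\EE_0'\|^2 \leq \tfrac 4 3 H_0$ and the claimed energy bound. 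This is exactly why condition~\eqref{eq:filterAssumptionPsi_Sinc} and the particular constants in~\eqref{eq:initialValueConditions} are imposed, and it also explains why $C_4$ does not appear in the final constant: it is absorbed into $H_0$. The second hypothesis is the set of filter/resonance conditions of~\cite{HaiLW06} on $\Psi$ and $\Phi$; using the half-angle identities $\cos z + 1 = 2\cos^2(\tfrac z 2)$ and $\sinc z = \sinc(\tfrac z 2)\cos(\tfrac z 2)$ I would translate the book's requirements into exactly~\eqref{eq:filterAssumptionPsi},~\eqref{eq:filterAssumptionPhi},~\eqref{eq:filterAssumptionPsiTimesPhi} and~\eqref{eq:filterAssumptionPsi_Sinc}, written in the variable $z = \tau\~\omega$. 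With both hypotheses in place, \cite[Theorem XIII.4.1]{HaiLW06} yields $\|\EE_n' - \EE'(t_n)\| \leq C\tau^2$ with $C$ depending only on $T - t_0$, $H_0$, $C_g$ and $C_1,\dots,C_3$.

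The main obstacle I anticipate is not the final invocation but the faithful translation of the abstract filter conditions of~\cite{HaiLW06} into the stated forms~\eqref{eq:filterAssumptionPsi}--\eqref{eq:filterAssumptionPsi_Sinc}: one must track how each resonance factor in the book's bounds is distributed between the $(\cos(\tau\OOmega)+\Id)$ prefactor, the filter $\psi_E$, and the $\phi_E$ acting on the argument, and verify that the \emph{strong} form~\eqref{eq:filterAssumptionPsi} (with $\sinc^2(\tfrac z2)$ rather than the weaker $\sinc(\tfrac z2)$) is the one actually needed for the second-order — as opposed to merely stable — estimate. The role of $\tau\~\omega \geq c_0$ is to certify that $\chi$ and the remaining filter quotients stay bounded on the relevant spectrum, thereby excluding the degenerate small-argument regime, where a separate classical Taylor analysis would apply instead.
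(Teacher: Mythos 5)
Your proposal is correct and follows essentially the same route as the paper: both recognize the two-step form~\eqref{eq:maxwellSplittingSchemeMultistepFormulation} as an instance of~\cite[Theorem XIII.4.1]{HaiLW06} with composite filters $\psi(z)=\tfrac12(\cos z+1)\psi_E(\tfrac12 z)$, $\phi(z)=\phi_E(\tfrac12 z)$, use~\eqref{eq:filterAssumptionPsi_Sinc} to bound $\chi$ and the $4/C_4^2$ factor in~\eqref{eq:initialValueConditions} to secure the initial oscillatory-energy bound for the perturbed data, and then invoke the cited theorem. Your verification of the energy bound is in fact spelled out in more detail than the paper's; the only point the paper mentions that you leave implicit is that $\GG\EE=\nabla U(\EE)$ with $U=\tfrac12\EE^T\GG\EE$, i.e.\ the right-hand side has the gradient structure required by~\cite{HaiLW06}.
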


\begin{proof}
  The filter functions of \cite[Theorem XIII.4.1]{HaiLW06} are
  \begin{equation}
    \label{eq:filterFunctionsHLW}
    \psi(z) := \tfrac 1 2 (\cos(z) + 1) \psi_E(\tfrac 1 2 z), \qquad \phi(z) := \phi_E(\tfrac 1 2 z), \forall ~ z \in \CC.
  \end{equation} 
  As $\GG$ is symmetric we can write $\GG \EE = \nabla
  U(\EE)$ with $U = \tfrac 1 2 \EE^T \GG \EE$.  From
  condition~\eqref{eq:filterAssumptionPsi_Sinc} we obtaion that $\chi$
  is bounded by $C_4$.  The factor $4/C_4^2$ in the estimate for $\CCB
  \BB_0$ in \eqref{eq:initialValueConditions} guaranties the estimate
  for the initial oscillatory energy for the perturbed initial values.
  Hence all assumptions of \cite[Theorem XIII.4.1]{HaiLW06} are
  fulfilled and its application completes the proof. \qed
\end{proof}

\begin{remark}
  \label{rem.2}
  In \cite[Theorem XIII.4.1]{HaiLW06} it is claimed that one would
  obtain
  $$
  \| \EE_n\,\!' - \EE\,\!'(t_n) \| \leq C \tau,
  $$
  if only the weaker estimate $|\psi(z)| \le C_0 |\sinc(\tfrac 1 2
  z)|$ instead of $|\psi(z)| \le C_1 \sinc(\tfrac 1 2 z)^2$
  holds. Numerical experiments with a linear $g$ i.e. $g(\EE) = \GG \EE$
  as the one considered in Section~\ref{sec:hlw} are a
  counterexample to this claim. It is this weaker estimate that is 
  fulfilled by the filter choice~\eqref{eq:filterChoiceJalo}.
\end{remark}

\begin{remark}
  \label{rem:correctScheme}
  By perturbing $\dot{\EE}_0$ of \eqref{eq:modFourierModelEqMaxwellIV} to $\dot{\EE}_0\,\!'$ of \eqref{eq:modifiedInitialValues} we have
  \begin{equation*}
    \EE_n = \EE_n\,\!',
    \qquad
    \forall ~ n \geq 0,
  \end{equation*}
  when computing $\EE_n$ with the original scheme \eqref{eq:timeIntegrationScheme} and $\EE_n\,\!'$ with the method described in 
  Theorem~\ref{thm:hlwAdoption}, so we can replace the numerical solution there by scheme \eqref{eq:timeIntegrationScheme}.
\end{remark}

To control the perturbation we apply the stability estimate of Lemma~\ref{lem:stabilityExactSolutionE} to the exact solution and obtain $\| \EE(t) - \EE\,\!'(t) \| \leq C \tau^2$, again with $C$ independent of $n$, $\tau$ and $\~\omega$.

\begin{lemma}
  \label{lem:stabilityExactSolutionE}
  Consider the exact solution $\Delta \EE(t)$ of \eqref{eq:modFourierModelEqMaxwellODE} with the initial values
  $\Delta\EE(t_0) = 0$ and $\partial_t \Delta \EE(t_0) = (\Id - \chi(\tau \OOmega)) \CCB \BB_0$ for a given $\BB_0$ and
  $\chi$ from \eqref{eq:modifiedInitialValueFirstStepChi} such that 
  Assumption~\ref{ass:physicalAndDiscretizationAssumptions} holds.
  
  If the filter functions $\psi_E, \phi_E : \RR_{\geq 0} \to \RR$ are entire functions with $\psi_E(0) = \phi_E(0) = 1$
  satisfying \eqref{eq:filterAssumptionsBequiv1} and \eqref{eq:filterAssumptionSincMinusPsi} 
  with constant $C_5$ independent of $\~\omega$,
  then
  \begin{equation*}
    \| \Delta \EE(t) \| \leq C \tau^2
  \end{equation*}
  with a constant $C$ independent of $n$, $\tau$ and $\~\omega$, but depending on $(T - t_0)$, $C_5$ and $H_0$.
\end{lemma}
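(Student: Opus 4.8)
The plan is to reduce the claim to two ingredients: a genuine $\tau^2$-bound on the perturbed initial velocity $\partial_t \Delta\EE(t_0) = (\Id - \chi(\tau\OOmega))\CCB\BB_0$, and conservation of the Hamiltonian $\HH$ from~\eqref{eq:maxwellHamiltonFunctionEDiscrete} along solutions of~\eqref{eq:modFourierModelEqMaxwellODE}. Once the initial velocity is shown to be of size $\tau^2$, energy conservation propagates this bound to the velocity at all times, and a single integration turns it into the asserted bound on $\Delta\EE$ itself.

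First I would estimate $\|\partial_t\Delta\EE(t_0)\|$. Since $\chi(0)=1$ by~\eqref{eq:modifiedInitialValueFirstStepChi}, the function $1-\chi$ vanishes at the origin; writing $1-\chi(z) = \bigl(\sinc(z) - \tfrac12(\cos(z)+1)\psi_E(\tfrac12 z)\bigr)/\sinc(z)$ and invoking the filter condition~\eqref{eq:filterAssumptionSincMinusPsi} gives $|1-\chi(z)| \le C_5 z^2$ on the relevant spectrum. Hence $g(z):=(1-\chi(z))/z^2$ extends to a function bounded by $C_5$, and since all scalar functions of the symmetric matrix $\OOmega$ commute,
\begin{equation*}
(\Id-\chi(\tau\OOmega))\CCB\BB_0 = \tau^2\, g(\tau\OOmega)\,\OOmega^2\CCB\BB_0.
\end{equation*}
Here Assumption~\ref{ass:physicalAndDiscretizationAssumptions}(ii), namely that $\OOmega$ carries the single nonzero eigenvalue $\~\omega$, is exactly what lets me evaluate $g(\tau\OOmega)$ spectrally, bound its norm by $C_5$, and move the dangerous factor $\~\omega^2$ onto the initial datum, where~\eqref{eq:stabilityAssumptionInitialValue} controls $\|\OOmega^2\CCB\BB_0\|\le\sqrt{H_0}$. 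This yields $\|\partial_t\Delta\EE(t_0)\| \le C_5\sqrt{H_0}\,\tau^2$.

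Second, because $\Delta\EE$ solves~\eqref{eq:modFourierModelEqMaxwellODE}, a one-line differentiation using $\GG=\GG^T$ shows that $\HH(\Delta\EE(t),\partial_t\Delta\EE(t))$ is constant in $t$. As $\Delta\EE(t_0)=0$, this constant equals $\tfrac12\|\partial_t\Delta\EE(t_0)\|^2 \le \tfrac12 C_5^2 H_0\,\tau^4$. Since $\HH$ is a sum of three nonnegative terms, recalling that $-\langle\EE,\GG\EE\rangle = \|\CCE\EE\|^2 \ge 0$, the kinetic term alone is bounded by $\HH$, so $\|\partial_t\Delta\EE(t)\| \le C_5\sqrt{H_0}\,\tau^2$ for all $t_0\le t\le T$. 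Finally, from $\Delta\EE(t_0)=0$ and the fundamental theorem of calculus,
\begin{equation*}
\|\Delta\EE(t)\| = \Bigl\| \int_{t_0}^{t} \partial_t\Delta\EE(s)\,\dd s \Bigr\| \le (T-t_0)\,C_5\sqrt{H_0}\,\tau^2 =: C\tau^2,
\end{equation*}
which matches the asserted dependence of $C$ on $(T-t_0)$, $C_5$ and $H_0$.

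The only delicate step is the first one: extracting a genuine $\tau^2$ rather than $\tau^2\~\omega^2$ from the perturbation. This hinges on the precise matching between the quadratic vanishing of $1-\chi$ guaranteed by~\eqref{eq:filterAssumptionSincMinusPsi} and the $\OOmega^2$-regularity of $\CCB\BB_0$ furnished by~\eqref{eq:stabilityAssumptionInitialValue}, made usable by the single-frequency structure of $\OOmega$. Everything afterwards is the standard energy-conservation argument for linear oscillatory systems.
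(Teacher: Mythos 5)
Your proof is correct and follows essentially the same approach as the paper: the decisive step in both is identical, namely converting $\eqref{eq:filterAssumptionSincMinusPsi}$ into $|1-\chi(z)|\le C_5 z^2$ and pairing the resulting factor $\tau^2\OOmega^2$ with the bound $\|\OOmega^2\CCB\BB_0\|\le\sqrt{H_0}$ from \eqref{eq:stabilityAssumptionInitialValue}. The only (cosmetic) difference is in the stability part: the paper writes the solution explicitly as $(t-t_0)\sinc\bigl((t-t_0)\sqrt{\OOmega^2-\GG}\bigr)\,\partial_t\Delta\EE(t_0)$ and bounds the propagator by $(t-t_0)$, whereas you obtain the same bound from conservation of $\HH$ plus the fundamental theorem of calculus, arriving at the same constant $(T-t_0)C_5\sqrt{H_0}$.
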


\begin{proof}
  By Assumption~\ref{ass:physicalAndDiscretizationAssumptions} the matrices $-\GG$ and $\OOmega^2$ are both symmetric  positive semidefinite, so are their sum and we can define the symmetric positive semidefinite matrix $\mathbf{B} := \sqrt{\OOmega^2 - \GG}$.
  Using the matrix sinc function the exact solution is
  \begin{align*}
    \Delta\EE(t)
    & = \cos((t - t_0) \mathbf{B}) \Delta\EE(t_0) + (t - t_0) \sinc((t - t_0) \mathbf{B}) \partial_t \Delta \EE(t_0) \\
    & = (t - t_0) \sinc((t - t_0) \mathbf{B}) (\Id - \chi(\tau \OOmega)) \CCB \BB_0.
  \end{align*}
  Since $| z \sinc(z)| \leq 1$, $z \geq 0$, we only have to control the real part.
  Condition~\eqref{eq:filterAssumptionSincMinusPsi} yields
  \begin{equation*}
    |1 - \chi(z)| \leq C_5 z^2
    \quad
    \Rightarrow
    \quad
    \| (\Id - \chi(\tau \OOmega)) \CCB \BB_0 \|
    \leq
    C_5 \tau^2 \| \OOmega^2 \CCB \BB_0 \|.
  \end{equation*}
  This gives the desired bound for $C = (T - t_0) C_5 \sqrt{H_0}$.\qed
\end{proof}

Combining Theorem \ref{thm:hlwAdoption} and Lemma \ref{lem:stabilityExactSolutionE} the error estimate for the electric field is obtained:
\begin{theorem}
  \label{thm:errorE}
  Let $\GG$, $\OOmega$ and the initial values $\pp_0$, $\EE_0$ and $\BB_0$ be as in 
  Assumptions~\ref{ass:physicalAndDiscretizationAssumptions} and \ref{ass:initialData}.
  Consider the numerical solution of \eqref{eq:maxwellsEquationsDiscrete} with the 
  scheme~\eqref{eq:timeIntegrationScheme} with step size $\tau$ satisfying $\tau \leq \tau_0$ 
  for sufficiently small $\tau_0$ independent of $\~\omega$ and $\tau \~\omega \geq c_0 > 0$.
  Denote the exact solution for the electric field by $\EE(t)$ and the numerical one by $\EE_n$.
  
  If the filter functions satisfy \eqref{eq:filterAssumptionsBequiv1} 
  and the assumptions \eqref{eq:filterAssumptionPsi} to \eqref{eq:filterAssumptionSincMinusPsi}
  for constants independent of $\~\omega$, the
  \begin{equation*}
    \| \EE_n - \EE(t_n) \| \leq C \tau^2
    \qquad
    \textup{for}
    \qquad
    t_n := t_0 + n \tau \leq T
  \end{equation*}
  with a constant $C$ independent of $n$, $\tau$ and $\omega$ but depending on the $(T - t_0)$ 
  and the constants $H_0$, $C_g$ and $C_1, ..., C_5$.
  \end{theorem}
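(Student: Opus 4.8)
The plan is to obtain the estimate by combining the two preceding results through a single triangle inequality, so the proof is short. Throughout, let $\EE_n$ denote the electric field produced by the original scheme~\eqref{eq:timeIntegrationScheme}, let $\EE(t)$ be the exact solution of~\eqref{eq:modFourierModelEqMaxwellODE} with the true initial derivative $\dot{\EE}_0 = \CCB \BB_0 - \OOmega^2 \pp_0$, let $\EE_n\,\!'$ be the numerical solution of Theorem~\ref{thm:hlwAdoption} (computed with the distinct first step~\eqref{eq:distinctFirstTimeStep} and the modified derivative $\dot{\EE}_0\,\!' = \chi(\tau\OOmega)\CCB\BB_0 - \OOmega^2\pp_0$), and let $\EE\,\!'(t)$ be the exact solution of the same ODE started from $\EE_0$ and $\dot{\EE}_0\,\!'$.

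First I would invoke Remark~\ref{rem:correctScheme}, which asserts $\EE_n = \EE_n\,\!'$ for all $n \geq 0$: the perturbation of the initial derivative was chosen precisely to absorb the distinct first time step, so the two numerical trajectories coincide. This allows me to replace $\EE_n$ by $\EE_n\,\!'$ in the error and to split
\[
  \| \EE_n - \EE(t_n) \| \leq \| \EE_n\,\!' - \EE\,\!'(t_n) \| + \| \EE\,\!'(t_n) - \EE(t_n) \|.
\]
The first term is bounded by $C\tau^2$ directly by Theorem~\ref{thm:hlwAdoption}, whose hypotheses — the filter bounds~\eqref{eq:filterAssumptionPsi}--\eqref{eq:filterAssumptionPsi_Sinc} together with the initial-data conditions~\eqref{eq:initialValueConditions} — are among the assumptions here.

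For the second term I would identify it with the quantity controlled by Lemma~\ref{lem:stabilityExactSolutionE}. Since equation~\eqref{eq:modFourierModelEqMaxwellODE} is linear and $\EE(t)$ and $\EE\,\!'(t)$ share the same initial position $\EE_0$, their difference solves the same equation with vanishing initial position and initial derivative
\[
  \dot{\EE}_0 - \dot{\EE}_0\,\!' = (\Id - \chi(\tau\OOmega))\CCB\BB_0.
\]
This is exactly the function $\Delta\EE(t)$ of Lemma~\ref{lem:stabilityExactSolutionE}, so $\| \EE\,\!'(t_n) - \EE(t_n) \| = \| \Delta\EE(t_n) \| \leq C\tau^2$ by that lemma, which uses filter condition~\eqref{eq:filterAssumptionSincMinusPsi} and the bound~\eqref{eq:stabilityAssumptionInitialValue} on $\OOmega^2\CCB\BB_0$. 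Adding the two $O(\tau^2)$ contributions yields the claim, with the final constant depending only on $(T-t_0)$, $H_0$, $C_g$ and $C_1,\dots,C_5$ as asserted.

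I do not expect a genuine obstacle at this stage: the substantial work has already been carried out in the adapted Hairer--Lubich--Wanner estimate (Theorem~\ref{thm:hlwAdoption}) and in the stability of the exact flow under the initial perturbation (Lemma~\ref{lem:stabilityExactSolutionE}). The only point demanding care is the bookkeeping of which initial data each of the four solutions carries, and verifying that the hypotheses of both auxiliary results — in particular the full list of filter conditions~\eqref{eq:filterAssumptionPsi}--\eqref{eq:filterAssumptionSincMinusPsi} and the normalisation of $\chi$ in~\eqref{eq:modifiedInitialValueFirstStepChi} ensuring $\chi(0)=1$ so that $\Id - \chi(\tau\OOmega)$ vanishes in the low-frequency block — are simultaneously in force; this is where the structural assumption~\eqref{eq:omegaMatrixStructure} on $\OOmega$ enters.
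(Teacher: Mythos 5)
Your proposal is correct and follows exactly the paper's own argument: identify $\EE_n$ with $\EE_n\,\!'$ via Remark~\ref{rem:correctScheme}, split the error by the triangle inequality, bound the first term with Theorem~\ref{thm:hlwAdoption} and the second by recognizing $\EE\,\!'(t)-\EE(t)$ as the $\Delta\EE(t)$ of Lemma~\ref{lem:stabilityExactSolutionE}. The bookkeeping of initial data, including the sign computation $\dot{\EE}_0-\dot{\EE}_0\,\!'=(\Id-\chi(\tau\OOmega))\CCB\BB_0$, matches the paper.
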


\begin{proof}
  By Remark \ref{rem:correctScheme} scheme \eqref{eq:timeIntegrationScheme} with adjusted initial value \eqref{eq:modifiedInitialValues} with \eqref{eq:modifiedInitialValueFirstStepChi} is the same as the two step scheme with the first step \eqref{eq:distinctFirstTimeStep} from Theorem \ref{thm:hlwAdoption}.
  We again call the perturbed exact solution $\EE\,\!'(t)$.
  Since $\EE(t_0) - \EE\,\!'(t_0) = 0 = \Delta \EE(t_0)$ and $\partial_t \EE(t_0) - \partial_t \EE\,\!'(t_0) = \partial_t \Delta \EE(t_0)$ 
  from Lemma \ref{lem:stabilityExactSolutionE} we have $\| \Delta \EE(t_n) \| \leq C \tau^2$.
  From this and with Theorem \ref{thm:hlwAdoption} we then obtain
  \begin{equation*}
    \| \EE_n - \EE(t_n) \| \leq \| \EE\,\!'_n - \EE\,\!'(t_n) \| + \| \Delta \EE(t_n) \| \leq C \tau^2.
  \end{equation*}
  The constant $C$ has the stated dependencies. \qed
\end{proof}

\subsection{Error in the magnetic flux}
\label{sec:estimatingB}

To estimate the error in the magnetic flux $\BB$, filter assumption~\eqref{eq:filterAssumptionSincMinusPhi} is needed.

\begin{theorem}
  \label{thm:errorB}
  Suppose the assumptions  of Theorem~\ref{thm:errorE} hold 
  and additionally assume~\eqref{eq:filterAssumptionSincMinusPhi} holds with $C_6$ independent of $\~\omega$.
  Then for $t_n := t_0 + n \tau \leq T$ 
  \begin{equation*}
    \| \BB_n - \BB(t_n) \| \leq C \tau^2
  \end{equation*}
  with a constant $C$ independent of $n$, $\tau$ and $\omega$ but depending on the $(T - t_0)$ 
  and the constants $H_0$, $C_g$ and $C_1$ to $C_6$.
  
\end{theorem}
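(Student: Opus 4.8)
The plan is to derive a discrete formula for the numerical magnetic flux $\BB_n$, compare it term-by-term with the exact solution, and absorb the discrepancies into the already-established second order estimate for $\EE$.

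The plan is to turn the one-step recursion (\ref{eq:maxwellSplittingOneStep_B}) into a closed quadrature formula and compare it term by term with the exact identity obtained from $\partial_t\BB=-\CCE\EE$. Summing (\ref{eq:maxwellSplittingOneStep_B}) telescopically gives
\[
  \BB_n = \BB_0 - \tau \CCE\, \phi_E(\tfrac\tau2\OOmega)\Bigl(\tfrac12\EE_0 + \sum_{k=1}^{n-1}\EE_k + \tfrac12\EE_n\Bigr),
\]
whereas integrating the exact equation yields $\BB(t_n)=\BB_0-\CCE\int_{t_0}^{t_n}\EE(s)\,\dd\!s$. Subtracting and inserting the exact field I would split the defect into (i) the contribution of the field error $\EE_k-\EE(t_k)$ and (ii) the error of the \emph{filtered trapezoidal rule} applied to the exact $\EE$. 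Part (i) is immediate: $\phi_E(\tfrac\tau2\OOmega)$ is bounded by (\ref{eq:filterAssumptionPhi}) and $\|\CCE\|\le C_c$, so the weighted sum of the $n=O(1/\tau)$ errors, each $O(\tau^2)$ by Theorem~\ref{thm:errorE} and each carrying a factor $\tau$, is again $O(\tau^2)$.

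For part (ii) the organizing idea is that the filter has been tuned so that the endpoint weight $\phi_E(\tfrac12 z)$ reproduces the \emph{exact oscillatory averaging kernel} $\sinc(z)$: this is precisely condition (\ref{eq:filterAssumptionSincMinusPhi}), $|\sinc(z)-\phi_E(\tfrac12 z)|\le C_6|z\sin(\tfrac12 z)|$. I would then diagonalise using the block structure (\ref{eq:omegaMatrixStructure}) of $\OOmega$ and treat the two components separately. On the kernel of $\OOmega$ the filter reduces to the identity, $\partial_{tt}\EE=\GG\EE$ has $\|\partial_{tt}\EE\|\le C_g\|\EE\|$ bounded uniformly in $\~\omega$ by (\ref{eq:EFieldBounded}), and the ordinary trapezoidal rule delivers the standard $O(\tau^2)$ quadrature error. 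On the $\~\omega$-block I would represent $\EE$ by the variation-of-constants formula (\ref{eq:EfieldFormula}), separating the free oscillation (driven by $\EE_0$ and $\dot\EE_0$) from the $\GG$-forced part, and evaluate the filtered trapezoidal sums of the oscillatory terms in closed form with the Grimm--Hochbruck trigonometric summation identities $\sum_{k}\cos((k+\tfrac12)\theta)=\sin(n\theta)/(2\sin(\tfrac\theta2))$ and its sine analogue, where $\theta:=\tau\~\omega$.

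The decisive gain of one power of $\tau$ comes from Assumption~\ref{ass:initialData}: the bounds on $\|\OOmega\EE_0\|$, $\|\OOmega^2\pp_0\|$ and $\|\CCB\BB_0\|$, propagated in time through (\ref{eq:HamiltonBounded}), force the oscillatory component of $\EE$ to have amplitude $O(1/\~\omega)$, so that both the exact integral and the filtered sum of this component are $O(1/\~\omega^2)$; since $\tau\~\omega\ge c_0$ this is exactly $O(\tau^2)$. The hard part, and the place where (\ref{eq:filterAssumptionSincMinusPhi}) is used, is to make this uniform across the near-resonant step sizes $\tau\~\omega\approx 2\pi m$: there the closed-form sums carry a factor $1/\sin(\tfrac\theta2)$ that blows up, and one must exploit that (\ref{eq:filterAssumptionSincMinusPhi}) forces $\phi_E$ to vanish at the resonances at the correct rate, so that $\phi_E(\tfrac\tau2\OOmega)$ reproduces $\sinc(\tau\OOmega)$ and the filtered quadrature stays within $O(\tau^2)$ of the exact integral. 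The genuine obstacle is the $\GG$-coupling between the two blocks, which prevents a clean diagonalisation; I would handle it by treating $\GG\EE$ as a bounded, slowly varying forcing and using summation by parts together with the a priori bound (\ref{eq:EFieldBounded}). Collecting the three contributions and factoring out $\|\CCE\|\le C_c$ then yields $\|\BB_n-\BB(t_n)\|\le C\tau^2$ with the asserted dependence of the constant.
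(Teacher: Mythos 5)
Your proposal follows essentially the same route as the paper's proof: telescope \eqref{eq:maxwellSplittingOneStep_B} into a filtered trapezoidal sum, split off the contribution of the field error via Theorem~\ref{thm:errorE}, and control the remaining filtered quadrature defect on the exact solution through the variation-of-constants representation, closed-form trigonometric summation, the resonance-taming filter condition~\eqref{eq:filterAssumptionSincMinusPhi}, the initial-data bound on $\|\OOmega\EE_0\|$, and a summation/integration by parts for the $\GG$-forced convolution term. The one imprecision is your intermediate claim that the oscillatory contributions are $O(1/\~\omega^2)$: what the summation identities actually deliver is $O(\tau^2)$ directly (e.g.\ a bound of the form $\tfrac{\tau}{2}C_c\,C_6\,\tau\|\OOmega\EE_0\|$), and since $\tau\~\omega$ is only bounded from below, $O(1/\~\omega^2)$ is a strictly stronger and in general false statement --- but this does not affect the validity of the argument.
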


\begin{proof}
  From equations \eqref{eq:maxwellsEquationsDiscrete_B} and \eqref{eq:maxwellSplittingOneStep_B} we obtain 
  the recursion of the error in \BB
  \begin{equation*}
    \BB(t_{n+1}) - \BBnp
    = \BB(t_n) - \BBn - \frac{\tau}{2} \CCE \left(
      2 \int_0^1 \EE(t_n + \tau s) \dd\!s
      - \phi_E(\tfrac \tau 2 \OOmega) (\EEn + \EEnp)
    \right).
  \end{equation*}
  Applying the variation of constants formula \eqref{eq:EfieldFormula} to \eqref{eq:modFourierModelEqMaxwellODE} for
  the argument $t_n + \tau s = t_{n+1} - (1 -\tau) s$ to expand around $t_n$ and at the same time around $t_{n+1}$ we obtain
  for the term in parentheses 
  \begin{align*}
    &
    2 \int_0^1 \EE(t_n + \tau s) \dd\!s
    - \phi_E(\tfrac \tau 2 \OOmega) (\EEn + \EEnp)
    \\
    & =
    \left( \int_0^1 \cos(\tau s \OOmega) \dd\!s - \phi_E(\tfrac \tau 2 \OOmega) \right) \EE(t_n)
    + \left( \int_0^1 \cos(\tau (s - 1) \OOmega) \dd\!s - \phi_E(\tfrac \tau 2 \OOmega) \right) \EE(t_{n+1}) \\
    & \qquad \qquad +
    \tau \left( \int_0^1 s \sinc(\tau s \OOmega) \dd\!s ~ \partial_t \EE(t_n)
      + \int_0^1 (s - 1) \sinc(\tau (s - 1) \OOmega) \dd\!s ~ \partial_t \EE(t_{n+1}) \right) \\
    & \qquad \qquad +
    \tau^2 \left( \int_0^1 s^2 I_n^+(\tau, s) \dd\!s + \int_0^1 (1 - s)^2 I_{n+1}^-(\tau, s) \dd\!s\right) \\
    & \qquad \qquad +
    \phi_E(\tfrac \tau 2 \OOmega) \left( (\EE(t_n) - \EEn) + (\EE(t_{n+1}) - \EEnp) \right),
  \end{align*}
  where $I^+_n$ and $I^-_{n+1}$ are bounded independently of $\~\omega$ 
  containing the convolution terms of the variation of constants formula.
  Here we use the boundedness of $\sinc$ and the bounds on $\EE(t)$ from \eqref{eq:EFieldBounded}.
  
  Computing the integrals and adding up the errors of all time steps yields
  \begin{subequations}
    \label{eq:errorB}
    \begin{align}
      &\BB(t_n) - \BBn = \notag
      \\
      \label{eq:errorB_E}
      & - \frac \tau 2 \CCE \sum_{l=0}^{n-1} \left[
        \left( \sinc(\tau \OOmega) - \phi_E(\tfrac \tau 2 \OOmega) \right) \EE(t_l)
        + \left( \sinc(\tau \OOmega) - \phi_E(\tfrac \tau 2 \OOmega) \right) \EE(t_{l+1})
      \right] \\
      \label{eq:errorB_dotE}
      & - \frac {\tau^2} 2 \CCE \sum_{l=0}^{n-1} \left[
        \cosc(\tau \OOmega) \partial_t \EE(t_l) - \cosc(\tau \OOmega) \partial_t \EE(t_{l+1})
      \right] \\
      \label{eq:errorB_Integral}
      & - \frac {\tau^3} 2 \CCE \sum_{l=0}^{n-1} \left[
        \int_0^1 s^2 I_l^+(\tau, s) \dd\!s + \int_0^1 (1 - s)^2 I_{l+1}^-(\tau, s) \dd\!s
      \right] \\
      \label{eq:errorB_ErrorE}
      & - \frac \tau 2 \CCE \sum_{l=0}^{n-1} \phi_E(\tfrac \tau 2 \OOmega) \left[
        (\EE(t_l) - \EE_l) + (\EE(t_{l+1}) - \EE_{l+1})
      \right].
    \end{align}
  \end{subequations}
  with the even entire function $\cosc := z \mapsto \int_0^1 \cos((1 - \xi) z) \xi \dd\!\xi$ 
  satisfying $z^2 \cosc(z) = 1 - \cos(z)$ and $|\cosc(z)|\leq \tfrac 1 2$, $z \in \RR$.
  
  We use the bound \eqref{eq:filterAssumptionPhi} on $\phi_E$, the
  bound on $\CCE$ and the $\mathcal{O}(\tau^2)$ estimate of
  Theorem~\ref{thm:errorE} to bound \eqref{eq:errorB_ErrorE} by $C
  \tau^2$, where we loose one factor $\tau$ due to summing up.  The
  bound for \eqref{eq:errorB_Integral} follows from the boundedness of
  $I^{\pm}_l$.  \eqref{eq:errorB_dotE} is
  a telescopic sum, so we do not loose a $\tau$ by summing up. The
  boundedness of the Hamiltonian in \eqref{eq:HamiltonBounded} yields a
  bound for the $\partial_t \EE(t)$. The boundedness of $\cosc$ then
  yields the second order estimate for \eqref{eq:errorB_dotE}.
  
  To control \eqref{eq:errorB_E} we apply the variation of constants formula \eqref{eq:EfieldFormula}, 
  with $t'=t_0$ to obtain $\EE(t_l)$.
  \begin{subequations}
    \label{eq:errorBvoc}
    \begin{align}
      \notag
      &  \eqref{eq:errorB_E}  = 
       \frac \tau 2 \CCE \sum_{l=0}^{n-1}
        \left[ \sinc(\tau \OOmega) - \phi_E(\tfrac \tau 2 \OOmega) \right] (\EE(t_l) + \EE(t_{l+1}))
       \\
      \label{eq:errorBvoc_cos}
      & = 
       \frac \tau 2 \CCE \sum_{l=0}^{n-1} \left[ \sinc(\tau \OOmega) - \phi_E(\tfrac \tau 2 \OOmega) \right] \left[ \cos(l \tau \OOmega) + \cos((l + 1) \tau \OOmega) \right] \EE_0 \\
      \label{eq:errorBvoc_sinc}
      & +
       \frac \tau 2 \CCE \sum_{l=0}^{n-1} \left[ \sinc(\tau \OOmega) - \phi_E(\tfrac \tau 2 \OOmega) \right] \left[ l \tau \sinc(l \tau \OOmega) + (l + 1) \tau \sinc((l + 1) \tau \OOmega) \right] \dot\EE_0 \\
      \label{eq:errorBvoc_integral}
      & +
       \tau \CCE \sideset{}{'}\sum_{l=0}^n \left[ \sinc(\tau \OOmega) - \phi_E(\tfrac \tau 2 \OOmega) \right] \left[ l^2 \tau^2 \int_0^1 (1 - \xi) \sinc(l \tau (1 - \xi) \OOmega) \GG \EE(t_0 + \tau l \xi)
        \right],
    \end{align}
  \end{subequations}
  where the prime in the summation indicates that the first and last term are weighted by $\tfrac 1 2$.
  At first sight the norm of each of the three terms (\ref{eq:errorBvoc}a,b,c) seems to 
  be in $\mathcal{O}(1)$. 
  
  To show that they are actually in $\mathcal{O}(\tau^2)$ we use the identities
  \begin{equation*}
    \cos(l z)
    = \frac{\sin((l + \tfrac 1 2) z) - \sin((l - \tfrac 1 2) z)}{2 \sin(\tfrac 1 2 z)},
    \,
    \,
    l \sinc(l z)
    = -\frac{\cos((l + \tfrac 1 2) z) - \cos((l - \tfrac 1 2) z)}{2 z \sin(\tfrac 1 2 z)}.
  \end{equation*}
  These allow to simplify the sum of cosines and sincs in~\eqref{eq:errorBvoc_cos} and~\eqref{eq:errorBvoc_sinc}
  respectively.
  \begin{equation}
    \label{eq:errorB_TrigonometricSummation}
    \begin{aligned}
      \left( \sinc(z) - \phi_E(\tfrac 1 2 z) \right)
      &\left( \sum_{l=0}^{n-1} \cos(l z) + \sum_{l=1}^{n} \cos(l z) \right) \\
      & = \sin(n z) \cos(\tfrac 1 2 z) z \frac{\sinc(z) - \phi_E(\tfrac 1 2 z)}{z\sin(\tfrac 1 2 z)}, \\
      \left( \sinc(z) - \phi_E(\tfrac 1 2 z) \right)
      &\left( \sum_{l=0}^{n-1} l \sinc(l z) + \sum_{l=1}^{n} l \sinc(l z) \right) \\
      & = -(\cos(n z) - 1) \cos(\tfrac 1 2 z) \frac{\sinc(z) - \phi_E(\tfrac 1 2 z)}{z \sin(\tfrac 1 2 z)}
    \end{aligned}
  \end{equation}
  The trigonometric functions multiplying the fractions on the right hand sides above are bounded, 
  such that it suffices to control
  \begin{equation*}
    \chi_0(z) := \frac{\sinc(z) - \phi_E(\tfrac 1 2 z)}{z \sin(\tfrac 1 2 z)}.
  \end{equation*}
  This is the place where we finally use the new filter assumption~\eqref{eq:filterAssumptionSincMinusPhi} 
  to obtain
  \begin{equation}
    | \chi_0(z) | \leq C_6,
  \end{equation}
  such that potential new singularities are controlled. We obtain
  \begin{equation}
    \|\eqref{eq:errorBvoc_cos}\|
    \leq \tfrac \tau 2 C_c \| \sin(n \tau \OOmega) \cos(\tfrac 1 2 \tau \OOmega) \| \| \chi_0(\tau \OOmega) \tau 
    \OOmega \EE_0 \|
    \leq \tfrac \tau 2 C_c C_6 \tau \sqrt{\tfrac 2 3 H_0},
  \end{equation}
  since $\sin$ and $\cos$ are bounded by one, $\chi_0$ by $C_6$ and $||\OOmega\EE_0||$ by $\tfrac 2 3 H_0$,   
  cf.~\eqref{eq:initialValueConditions}. Likewise we have
  \begin{equation}
    \|\eqref{eq:errorBvoc_sinc}\|
    \leq \tfrac 1 2 \tau^2 C_c \| (\cos(n \tau \OOmega) - \Id) \cos(\tfrac 1 2 \tau \OOmega) \| \| \chi_0(\tau \OOmega) \dot\EE_0 \|
    \leq \tfrac 1 2 \tau^2 C_c 2 C_6 \sqrt{2 H_0},
  \end{equation}
  where $\dot\EE_0$ is bounded by the Hamiltonian in \eqref{eq:HamiltonBounded}.
  
  This way we used the filter function $\phi_E$ to filter periodic singularities.
  This is the reason why we need $\sinc$ terms on the right hand side of the filter assumptions.
  For the remainder we use it to filter out higher order singularities in a neighborhood of zero, 
  that leads to factors of $z$ on the right hand side in the filter assumptions.
  
  It remains to bound the integral term of the summand \eqref{eq:errorBvoc_integral}, 
  that is we need an $\mathcal{O}(1)$ bound on
  \begin{equation}
    \label{eq:errorB_E_IntegralShortcut}
    J_l := \tau \mapsto l^2 \vartheta_0(\tau \OOmega) \int_0^1 (1 - \xi) \sinc(l \tau (1 - \xi) \OOmega) 
    f(\xi) \dd\!\xi
  \end{equation}
  for
  \begin{equation}
    \label{eq:errorB_E_IntegralShortcut_f}
    f := \xi \mapsto \GG \EE(t_0 + \tau l \xi),
  \end{equation}
  and the auxillary functions
  \begin{equation}
    \label{eq:errorB_E_Integral_theta}
    \vartheta_i(z) := \frac{\sinc(z) - \phi_E(\tfrac 1 2 z)}{z^i},
    \qquad
    i \in \lbrace 0, 1, 2 \rbrace.
  \end{equation}
  These functions $\vartheta$ satisfy the relations
  \begin{equation}
    \label{eq:errorB_E_Integral_thetaRelations}
    z \vartheta_{i}(z) = \vartheta_{i-1}(z),
    \qquad
    i \in \lbrace 1, 2 \rbrace.
  \end{equation}
  where the first one in turn yields
  \begin{equation*}
    l \vartheta_0(z) (1 - \xi) \sinc(l (1 - \xi) z) = \vartheta_1(z) \sin(l (1 - \xi) z).
  \end{equation*}
  The filter assumption \eqref{eq:filterAssumptionSincMinusPhi} applied directly gives an $\mathcal{O}(1)$ bound on $\vartheta_1$, which in turn leads to a $\mathcal{O}(n)$ bound on $J_l$ for $l=1,\dots,n$ 
  and thus to a first order estimate for the magnetic flux.
  
  To improve this estimate we use the identity $z \sinc(z) = \sin(z)$ which gives an 
  even sharper estimate on the filtering abilities of $\phi_E$ by
  \begin{equation}
    | \sinc(z) - \phi_E(\tfrac 1 2 z) |
    \leq C_6 | \tfrac 1 2 z^2 \sinc(\tfrac 1 2 z) |
    \leq \tfrac 1 2 C_6 z^2
  \end{equation}
  and thus an $\mathcal{O}(1)$ bound on $\vartheta_2$, since the $\sinc$ function is bounded by one.
  
  To make use of this estimate we use
  \begin{equation*}
    \vartheta_1(z) l \sin(l (1 - \xi) z)
    \stackrel{\eqref{eq:errorB_E_Integral_thetaRelations}}{=}
    \vartheta_2(z) l z \sin(l (1 - \xi) z)
    = \frac{\partial}{\partial \xi} \vartheta_2(z) \cos(l (1 - \xi) z).
  \end{equation*}
  Integration by parts of $J_l$ yields
  \begin{align*}
    \int_0^1 \vartheta_1(z) l \sin(l (1 - \xi) z) f(t_0 + l \tau \xi) \dd\!\xi
    =
    & \bigl[ \vartheta_2(z) \cos(l (1 - \xi) z) f(t_0 + l \tau \xi) \bigr|_{\xi = 0}^{\xi=1} \\
    & - \int_0^1 \vartheta_2(z) \cos(l (1 - \xi) z) \frac{\partial f}{\partial \xi} (t_0 + l \tau \xi) l \tau \dd\!\xi.
  \end{align*}
  Since by definition of $f$ in~\eqref{eq:errorB_E_IntegralShortcut_f} we have
  \begin{equation*}
    \tfrac{\dd}{\dd\!\xi} f = \xi \mapsto l \tau \GG \partial_t \EE(t_0 + \tau l \xi)
  \end{equation*}
  this applies to $J_l$ by
  \begin{align*}
    J_l(\tau)
    & =
    \vartheta_2(\tau \OOmega) \GG \EE(t_0 + l \tau)
    - \vartheta_2(\tau \OOmega) \cos(l \tau \OOmega) \GG \EE_0
    \\
    & \qquad \qquad
    - \int_0^1 \vartheta_2(\tau \OOmega) \cos(l \tau (1 - \xi) \OOmega) l \tau \GG \partial_t \EE(t_0 + l \tau \xi) \dd\!\xi.
  \end{align*}
  The boundedness of $\GG$ in Assumption~\ref{ass:physicalAndDiscretizationAssumptions} 
  and the stability estimates in \eqref{eq:HamiltonBounded} and \eqref{eq:EFieldBounded} 
  allow to control $\GG \EE$ and $\GG \partial_t \EE$. 
  All the matrix functions are bounded and $l \tau \leq T - t_0$ 
  such that $J_l(\tau)$ is in $\mathcal{O}(1)$ with a constant 
  independent of $\~\omega$.
  
  This concludes the proof for the error in the magnetic flux.
  \qed
\end{proof}

\begin{remark}
  By choosing $\phi_E(z) = \sinc(2 z)$ the left hand side of~\eqref{eq:filterAssumptionSincMinusPhi} 
  and the  term~\eqref{eq:errorB_E}  vanish, thus one may set $C_6 = 0$ and the above proof is simplified drastically. 
  It is shown in~\cite{Jan15} that the choice $\phi_E(z) = \sinc(2 z)$ is indeed a valid choice
  and respects the filter conditions~\eqref{eq:filterAssumptionPhi},~\eqref{eq:filterAssumptionPsiTimesPhi},~\eqref{eq:filterAssumptionSincMinusPhi} and~\eqref{eq:filterAssumptionSinc2MinusSincPhi}.
  
 \end{remark}

\subsection{Error in the impulses}
\label{sec:estimatingp}

To conclude the proof of the main result, Theorem~\ref{thm:errorEstimateFinal}, we have to show the corresponding estimate for the error in the impulses. This is where the last two assumptions on the filter functions \eqref{eq:filterAssumptionPsiBounded} and \eqref{eq:filterAssumptionSinc2MinusSincPhi} enter.

\begin{theorem}
  \label{thm:errorp}
  Suppose the assumptions of Theorem \ref{thm:errorB} hold. If
  \eqref{eq:filterAssumptionPsiBounded} and \eqref{eq:filterAssumptionSinc2MinusSincPhi} hold with $C_7$ and $C_8$
  independent of $\~\omega$ then
  \begin{equation*}
    \| \pp_n - \pp(t_n) \| \leq C \tau^2
    \qquad
    \textup{for}
    \qquad
    t_n := t_0 + n \tau \leq T,
  \end{equation*}
  with a constant $C$ independent of $n$, $\tau$ and $\omega$ 
  but depending on the $(T - t_0)$ and the constants $H_0$, $C_g$ and $C_1$ to $C_8$.
%
\end{theorem}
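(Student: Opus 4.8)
The plan is to analyse $\pp$ and $\EE$ jointly. The central propagator~\eqref{eq:maxwellSplittingSchemeWithFilterpTildeOsci} is precisely the exact flow
\[
M(\tau) := \mat{\cos(\tau\OOmega) & \tau\sinc(\tau\OOmega) \\ -\OOmega\sin(\tau\OOmega) & \cos(\tau\OOmega)}
\]
of the oscillator $\partial_t\pp=\EE$, $\partial_t\EE=-\OOmega^2\pp$. Eliminating the intermediate quantities $\EEnplus$ and $\BBnph$ from~\eqref{eq:timeIntegrationScheme} (with $\phi_B\equiv\psi_B\equiv1$) I would rewrite the scheme as the single one-step recursion
\[
\mat{\ppnp \\ \EEnp} = M(\tau)\mat{\ppn \\ \EEn} + \mat{\tfrac{\tau^2}{2}\sinc(\tau\OOmega) \\ \tfrac{\tau}{2}(\cos(\tau\OOmega)+\Id)}\psi_E(\tfrac\tau2\OOmega)\,\CCB\,\BBnph,
\]
whose first row is exactly~\eqref{eq:maxwellSplittingSchemeWithFilterFullStep_p}. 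The decisive gain of this coupled form is that the homogeneous part iterates as $M(\tau)^{k}=M(k\tau)$, so the solution of the error recursion carries the genuine discrete oscillations $\cos(k\tau\OOmega)$ and $k\tau\sinc(k\tau\OOmega)$ rather than powers of $\cos(\tau\OOmega)$; this is what makes the trigonometric summation of Theorem~\ref{thm:errorB} applicable here.

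Writing the exact pair by variation of constants with the same $M(\tau)$ and forcing $\int_0^\tau M(\tau-s)(0,\CCB\BB(t_l+s))^{\top}\dd\!s$, subtracting, and using that both initial errors vanish, the error vector $e_n$ with components $\pp(t_n)-\ppn$ and $\EE(t_n)-\EEn$ satisfies $e_n=\sum_{l=0}^{n-1}M((n-1-l)\tau)\,d_l$, with $d_l$ the per-step defect of the forcing. Reading off the first component, $\pp(t_n)-\ppn$ is the sum over $l$ of $\cos((n-1-l)\tau\OOmega)\,d_l^{(1)}$ and $(n-1-l)\tau\sinc((n-1-l)\tau\OOmega)\,d_l^{(2)}$. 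I would split $d_l$ into three parts. First, replacing the numerical $\BB_l,\EE_l$ inside $\BB_{l+\frac12}=\BB_l-\tfrac\tau2\CCE\phi_E(\tfrac\tau2\OOmega)\EE_l$ by the exact fields produces the already controlled errors of size $\mathcal{O}(\tau^2)$ (Theorems~\ref{thm:errorE} and~\ref{thm:errorB}); bounded by $|\psi_E|\le C_7$ from~\eqref{eq:filterAssumptionPsiBounded} and $|\phi_E|\le C_2$ from~\eqref{eq:filterAssumptionPhi}, they lose only one power of $\tau$ under summation and stay $\mathcal{O}(\tau^2)$. Second, expanding $\BB(t_l+s)$ about $t_l$ via $\partial_t\BB=-\CCE\EE$, $\CCB\CCE=-\GG$ and using $\int_0^\tau(\tau-s)\sinc((\tau-s)\OOmega)\dd\!s=\tau^2\cosc(\tau\OOmega)=\tfrac{\tau^2}{2}\sinc^2(\tfrac\tau2\OOmega)$ together with $\int_0^\tau\cos((\tau-s)\OOmega)\dd\!s=\tau\sinc(\tau\OOmega)$ isolates the leading quadrature defect, whose symbols are $\tfrac{\tau^2}{2}\bigl(\sinc^2(\tfrac\tau2\OOmega)-\sinc(\tau\OOmega)\psi_E(\tfrac\tau2\OOmega)\bigr)$ in the first and $\tau\bigl(\sinc(\tau\OOmega)-\tfrac12(\cos(\tau\OOmega)+\Id)\psi_E(\tfrac\tau2\OOmega)\bigr)$ in the second component. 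The remaining $\GG$-weighted double integrals form the third part.

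For the leading defect I would expand the exact datum $\CCB\BB(t_l)$ into oscillations. Writing $\CCB\BB=\partial_t\EE+\OOmega^2\pp$ and inserting~\eqref{eq:EfieldFormula} and~\eqref{eq:dotEfieldFormula}, the $\sin(l\tau\OOmega)$ contributions cancel and leave $\CCB\BB(t_l)=\cos(l\tau\OOmega)\CCB\BB_0$ plus a $\GG$-integral. The resulting convolutions $\sum_l\cos((n-1-l)\tau\OOmega)\cos(l\tau\OOmega)$ and $\sum_l(n-1-l)\tau\sinc((n-1-l)\tau\OOmega)\cos(l\tau\OOmega)$ are summed exactly by the identities behind~\eqref{eq:errorB_TrigonometricSummation}, which trade the naive factor $n$ for $\mathcal{O}(1/|\sin(\tfrac\tau2\OOmega)|)$. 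A bound of the type~\eqref{eq:filterAssumptionSinc2MinusSincPhi}, namely $|\sinc^2(\tfrac12 z)-\sinc(z)\psi_E(\tfrac12 z)|\le C_8\sin^2(\tfrac12 z)$, then cancels one power of $\sin(\tfrac\tau2\OOmega)$ in the first symbol, while~\eqref{eq:filterAssumptionSincMinusPsi} in the form $|\sinc(z)-\tfrac12(\cos z+1)\psi_E(\tfrac12 z)|\le C_5 z^2|\sinc(z)|$ does the same for the second; in both cases an extra factor $\tau$ is gained and the contribution is $\mathcal{O}(\tau^2)$. The $\GG$-integral remainders carry an extra $\tau$ and a bounded $\GG$; I would reduce them to $\mathcal{O}(1)$ size by the integration-by-parts manoeuvre with the auxiliary functions~\eqref{eq:errorB_E_Integral_theta} as in Theorem~\ref{thm:errorB}, using~\eqref{eq:HamiltonBounded} and~\eqref{eq:BFieldBounded} to control $\GG\EE$ and $\GG\partial_t\EE$.

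The main obstacle is the second contribution: the leading quadrature defect is only $\mathcal{O}(\tau^2)$ per step, hence naively $\mathcal{O}(\tau)$ after $n\sim\tau^{-1}$ summations, so second order is recovered solely through near-cancellation at the resonances $\tau\~\omega\in2\pi\mathbb{Z}$. Turning this into a rigorous $\mathcal{O}(\tau^2)$ bound rests on three ingredients that must fit together exactly: the coupled reformulation (so the summation kernel is a true discrete oscillation and not a power of a cosine), the cancellation that collapses $\CCB\BB(t_l)$ to a single $\cos(l\tau\OOmega)$ mode, and the precise matching of the $\sin^2$-type decay granted by~\eqref{eq:filterAssumptionSinc2MinusSincPhi} and~\eqref{eq:filterAssumptionSincMinusPsi} against the $1/|\sin(\tfrac\tau2\OOmega)|$ singularity created by the trigonometric summation. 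This is exactly the matching that the original filter~\eqref{eq:filterChoiceJalo} fails to achieve.
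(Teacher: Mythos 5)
Your overall architecture (use the already-proven bounds for $\EE$ and $\BB$, isolate a leading defect proportional to $\sinc^2(\tfrac\tau2\OOmega)-\sinc(\tau\OOmega)\psi_E(\tfrac\tau2\OOmega)$ acting on $\CCB\BB$, and kill the resulting $1/\sin$-type singularity with \eqref{eq:filterAssumptionSinc2MinusSincPhi}) matches the paper's strategy, but the resonance-handling step, which you yourself identify as the main obstacle, does not close as written. Two concrete problems. First, your expansion of the datum is wrong: from $\partial_t(\CCB\BB)=-\CCB\CCE\EE=\GG\EE$ one gets $\CCB\BB(t_l)=\CCB\BB_0+\int_0^{t_l}\GG\EE(s)\,\dd s$, i.e.\ \emph{all} oscillatory contributions cancel, not just the $\sin(l\tau\OOmega)$ ones; the leading term is the constant $\CCB\BB_0$, not $\cos(l\tau\OOmega)\CCB\BB_0$. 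Second, and more seriously, if the datum really did carry a $\cos(l\tau\OOmega)$ mode, the resulting sums would be convolutions such as $\sum_{l=0}^{n-1}\cos((n-1-l)z)\cos(lz)=\tfrac n2\cos((n-1)z)+\tfrac12\sum_{l}\cos((n-1-2l)z)$, and the identities behind \eqref{eq:errorB_TrigonometricSummation} only tame the second piece; the secular term $\tfrac n2\cos((n-1)z)$ is irreducible (it is the genuine resonant response of the oscillator to forcing at its own frequency) and, multiplied by the per-step defect $\tfrac{\tau^2}2 C_8\sin^2(\tfrac z2)$, yields only $\mathcal{O}(n\tau^2)=\mathcal{O}(\tau)$. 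So your claim that the summation identities ``trade the naive factor $n$ for $\mathcal{O}(1/|\sin(\tfrac z2)|)$'' is false for these convolutions, and the argument as stated delivers first order, not second. The approach can be rescued precisely because of the first error: with the correct, non-oscillatory $\CCB\BB(t_l)=\CCB\BB_0+\mathcal{O}(1)$ the convolutions collapse to plain sums $\sum_k\cos(kz)$, $\sum_k\sin(kz)$ plus an Abel summation against a slowly varying remainder, and then the filter bounds do suffice --- but that repair is exactly the missing idea.

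For comparison, the paper avoids the coupled propagator altogether: it keeps a scalar recursion for $\pp$ alone with kernel $\cos^l(\tau\OOmega)$ (a \emph{power} of the cosine, not a discrete oscillation), absorbs the $\EE$- and $\BB$-errors and the $\GG$-integrals into a per-step term $J_n=\mathcal{O}(\tau^3)$ via Theorems~\ref{thm:errorE} and~\ref{thm:errorB}, and treats the single remaining term $\tfrac{\tau^2}2\sum_l\cos^l(\tau\OOmega)\bigl(\sinc^2(\tfrac\tau2\OOmega)-\sinc(\tau\OOmega)\psi_E(\tfrac\tau2\OOmega)\bigr)\CCB\BB(t_{n-l-1})$ by partial summation with the telescoping identity $\cos^n(z)=\bigl(\cos^{n+1}(z)-\cos^n(z)\bigr)/\bigl(-2\sin^2(\tfrac12 z)\bigr)$, using $\|\CCB(\BB(t_{n-l})-\BB(t_{n-l-1}))\|\leq\tau\widehat C$ and \eqref{eq:filterAssumptionSinc2MinusSincPhi} to cancel the $\sin^{-2}$ singularity. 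That route never produces a resonant convolution in the first place, which is why it needs no statement about the oscillatory structure of $\CCB\BB(t_l)$.
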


\begin{proof}
%
  We start by expressing the impulses with the fundamental theorem of calculus applied to the differential equation for the impulses \eqref{eq:maxwellsEquationsDiscrete_p}. Applying \eqref{eq:EfieldFormula} gives a formula for the
  exact solution of the electric field.
  The numerical solution is expressed by~\eqref{eq:maxwellSplittingSchemeWithFilterFullStep_p}. Then the error in the $(n+1)$st step reads
  \begin{subequations}
    \label{eq:errorp}
    \begin{align}
      \notag
      \pp(t_{n+1}) - \ppnp
      & = \pp(t_n)
      + \tau \left( \int_0^1 \cos(\tau s \OOmega) \dd\!s \EE(t_n) - \sinc(\tau \OOmega) \EEn \right) \\
      \notag
      & \qquad
      + \tau \int_0^1 \tau s \sinc(\tau s \OOmega) \dd\!s \;\! \partial_t\EE(t_n)
      - \cos(\tau \OOmega) \ppn \\
      \notag
      & \qquad
      - \tfrac {\tau^2} 2 \sinc(\tau \OOmega) \psi_E(\tfrac \tau 2 \OOmega) \CCB \BBn
      + \tau^3 I_n(\tau) \\
      \notag
      & =
      \pp(t_n) + \tau \sinc(\tau \OOmega) (\EE(t_n) - \EEn) + \tau^3 I_n(\tau) \\
      \label{eq:errorp_B}
      & \qquad
      + \tau^2 \left( \cosc(\tau \OOmega) \CCB \BB(t_n) - \tfrac 1 2 \sinc(\tau \OOmega) \psi_E(\tfrac \tau 2 \OOmega) \CCB \BBn \right) \\
      \label{eq:errorp_p}
      & \qquad
      -\tau^2 \OOmega^2 \cosc(\tau \OOmega) \pp(t_n) - \cos(\tau \OOmega) \pp_n,
    \end{align}
  \end{subequations}
  where
  \begin{align}
    \notag
		&I_n(\tau) : = \int_0^1 s^2 \int_0^1 (1 - \xi) \sinc(\tau s (1 - \xi) \OOmega) \GG \EE(t_n + \tau s \xi) \dd\!\xi \dd\!s \\
		\label{eq:errorp_In}
		&\hspace{5cm} - \tfrac 1 4 \sinc(\tau \OOmega) \psi_E(\tfrac \tau 2 \OOmega) \GG \phi_E(\tfrac \tau 2 \OOmega) \EEn.
	\end{align}
	The $\cosc$ function was already used in \eqref{eq:errorB_dotE} for the estimate for $\BB$ and can also be written as an 
        integral over $\xi \sinc(\xi z)$.
	The filter estimate \eqref{eq:filterAssumptionPsiBounded} yields the boundedness of $\psi_E$,  
        with $\| \EE_n \| \leq \| \EE(t_n) \| + \| \EE_n - \EE(t_n) \|$, the estimate for $\EE$ of Theorem \ref{thm:errorE} 
        and the stability estimate for the electric field~\eqref{eq:EFieldBounded} we obtain
	\begin{equation}
		\label{eq:errorp_normIn}
		\| I_n(\tau) \| \leq C_I,
	\end{equation}
	with a constant $C_I$ independent of $\~\omega$, since $\tau \leq \tau_0$.
	For \eqref{eq:errorp_p} we use $z^2 \cosc(z) = 1 - \cos(z)$ to retrieve
	\begin{equation}
		\label{eq:errorp_pExpression}
		-\tau^2 \OOmega^2 \cosc(\tau \OOmega) \pp(t_n) - \cos(\tau \OOmega) \ppn
		= \cos(\tau \OOmega) (\pp(t_n) - \ppn) - \pp(t_n),
	\end{equation}
	for \eqref{eq:errorp_B} anlogously with $\cosc(2 z) = \frac 1 2 \sinc^2(z)$
	\begin{align}
		\notag
		& \tau^2 \left( \cosc(\tau \OOmega) \CCB \BB(t_n) - \tfrac 1 2 \sinc(\tau \OOmega) \psi_E(\tfrac \tau 2 \OOmega) \CCB \BBn \right) \\
		\label{eq:errorp_BExpression}
		\notag
		& \qquad \qquad \qquad \qquad =
		\frac {\tau^2} 2 \left( (\sinc^2(\tfrac \tau 2 \OOmega) - \sinc(\tau \OOmega) \psi_E(\tfrac \tau 2 \OOmega)) \CCB \BB(t_n) \right. \\
		& \qquad \qquad \qquad \qquad \qquad \qquad
		+ \left. \sinc(\tau \OOmega) \psi_E(\tfrac \tau 2 \OOmega) \CCB (\BB(t_n) - \BB_n) \right).
	\end{align}
	We define the next auxiliary function
	\begin{equation*}
		J_n(\tau):= \tau \left(
			\sinc(\tau \OOmega) (\EE(t_n) - \EEn)
			+ \tau\sinc(\tau \OOmega) \psi_E(\tfrac \tau 2 \OOmega) \CCB (\BB(t_n) - \BB_n)
			+ \tau^2 I_n(\tau)
		\right).
	\end{equation*}
	This, the boundedness of $\sinc$, $\psi_E$ und $\CCB$ and the error estimates for $\EE$ and $\BB$ from Theorems \ref{thm:errorE} and \ref{thm:errorB} yields the second order estimate
	\begin{equation}
		\label{eq:errorp_normJn}
		\| J_n(\tau) \|
		\leq C_E \tau^2 + \tau C_7 C_c C_B \tau^2 + C_I \tau^2 =: C_J \tau^2
	\end{equation}
	for $J_n(\tau)$.
	Resolving the recursion in \eqref{eq:errorp} we get the summed error
	\begin{align}
		\notag
		&
		\pp(t_n) - \ppn
		= \tau \sum_{l=0}^n \cos^l(\tau \OOmega) J_{n-l-1}(\tau) \\
		\label{eq:errorp_resolved}
		& \qquad
		+ \tfrac{\tau^2}{2} \sum_{l=0}^n \cos^l(\tau \OOmega) (\sinc^2(\tfrac \tau 2 \OOmega) - \sinc(\tau \OOmega) \psi_E(\tfrac \tau 2 \OOmega)) \CCB \BB(t_{n-l-1}).
	\end{align}
	The first summand with $J_n(\tau)$ and the leading factor of $\tau$ is of right order due to \eqref{eq:errorp_normJn}.
	The second summand seems to be of too low order to succeed with a global error proof of second order.
	We have to use the trigonometric identity
	\begin{equation*}
		\cos^n(z) = \frac{\cos^{n+1}(z) - \cos^{n}(z)}{-2\sin^2(\tfrac 1 2 z)}
	\end{equation*}
	and the filtering abilities of $\psi_E$ to avoid summing up of errors.
	With the help of parital summation
	\begin{equation*}
		\sum_{l=0}^{n-1} (f_{l+1} - f_l) g_l = \sum_{l=0}^{n-1} f_l (g_{l-1} - g_l) + f_n g_{n-1} - f_0 g_{-1},
	\end{equation*}
	with $f_l := \frac{\cos^l(z)}{-2\sin^2(\tfrac 1 2 z)}$ and $g_l := \CCB \BB(t_{n-l-1})$ the trigonometric identity yields
	\begin{align}
		\notag
		& (\sinc^2(\tfrac 1 2 z) - \sinc(z) \psi_E(\tfrac 1 2 z)) \sum_{l=0}^{n-1} \cos^l(z) \CCB \BB(t_{n-l-1}) \\
		\notag
		& \qquad \qquad= 
		\frac{\sinc^2(\tfrac 1 2 z) - \sinc(z) \psi_E(\tfrac 1 2 z)}{-2\sin^2(\tfrac 1 2 z)} \\
		\label{eq:errorp_BExpression_partialSummation}
		& \qquad \qquad \qquad
		\cdot \left(
			\sum_{l=1}^{n} \cos^l(z) \CCB (\BB(t_{n-l}) - \BB(t_{n-l-1})) + \cos^n(z) \CCB \BB_0 - \CCB \BB(t_n)
		\right).
	\end{align}
	Filter assumption \eqref{eq:filterAssumptionSinc2MinusSincPhi} gives us the estimate
	\begin{equation*}
		\left| \frac{\sinc^2(\tfrac 1 2 z) - \sinc(z) \psi_E(\tfrac 1 2 z)}{-2\sin^2(\tfrac 1 2 z)} \right| \leq \tfrac 1 2 C_8
	\end{equation*}
	for the singularities the appeared in \eqref{eq:errorp_BExpression_partialSummation} and thus
	\begin{align*}
		\| \text{r.h.s. of } \eqref{eq:errorp_BExpression_partialSummation} \|
		&
		\leq \tfrac 1 2 C_8 \left(\sum_{l=0}^{n-1} \| \CCB (\BB(t_{n-l}) - \BB(t_{n-l-1})) \| + \~C \right)
	\end{align*}
	using the boundedness of the magnetic flux \eqref{eq:BFieldBounded} to estimate the boundary terms $\CCB\BB_0$ and $\CCB\BB(t_n)$ with a constant $\~C$ independent of $\~\omega$. Since the boundary terms appear only once, it is sufficient that they are in $\mathcal{O}(1)$.
	
	To generate the last factor of $\tau$ we once more need to apply the fundamental theorem of calculus, this time on the analytical solution of the magnetic flux and substitute the right hand side of the differential equation for $\BB$ \eqref{eq:maxwellsEquationsDiscrete_B} in the time derivate:
	\begin{align*}
		& \| \CCB (\BB(t_{n-l}) - \BB(t_{n-l-1})) \| \\
		& \hspace{1cm} =
		\left\| \CCB \left( \BB(t_{n-l-1}) - \tau \int_0^1 \CCE \EE(t_{n-l-1} + \tau \xi) \dd\!\xi - \BB(t_{n-l-1}) \right) \right\| 
		\leq \tau \widehat{C},
	\end{align*}
	with another constant $\widehat{C}$ independent of $\~\omega$, using the boundedness of $\EE(t)$.
	The $\tau^2$ factor in front of the second sum in the error formula \eqref{eq:errorp_resolved} is thus sufficient for the global second order estimate.
	\qed
\end{proof}

\section{Multiple high frequencies}
\label{sec:multifrequency}
Consider now the case of multiple frequencies, i.e. let's assume that $\OOmega$ is a positive semi-definite matrix and that $\omega$ is a bound for its largest eigenvalue.
Modifying the results and the proof of Grimm and Hochbruck~\cite{GriH06}
a proof for the second order error estimate for the triple splitting method 
was obtained by Buchholz and Hochbruck in~\cite{BucH15}.

The only ingredient that is required in our convergence 
proof is a replacement for Theorem~\ref{thm:hlwAdoption}. 
We can use \cite[Theorem 1]{GriH06} of Grimm and Hochbruck directly by writing 
their scheme as a two step formulation for the solution 
(getting rid of its derivative). Again we have to perturb the initial values to
adjust to the situation at hand.

We use the multistep form \eqref{eq:maxwellSplittingSchemeMultistepFormulation} with destinct first step \eqref{eq:distinctFirstTimeStep} for the perturbed initial values \eqref{eq:modifiedInitialValues}.
As already stated in Remark \ref{rem:correctScheme} this is equivalent to our triple splitting method \eqref{eq:timeIntegrationScheme} with $\psi_B \equiv \phi_B \equiv 1$.
The two step formulation with the destinct first step is equivalent to \cite[Scheme (3)]{GriH06} with filter functions $\phi$ and $\psi$ as in \eqref{eq:filterFunctionsHLW}, $\psi(z) = \sinc(z) \psi_1(z)$ and $\psi_0(z) = \cos(z) \psi(z)$.
For a second order error estimate for scheme~\eqref{eq:timeIntegrationScheme} with $\psi_B \equiv \phi_B \equiv 1$ we then require \eqref{eq:filterAssumptionPsi_Sinc}-\eqref{eq:filterAssumptionSinc2MinusSincPhi} as before, but replace the first three assumptions \eqref{eq:filterAssumptionPsi}-\eqref{eq:filterAssumptionPsiTimesPhi} by
\begin{subequations}
\label{eq:filterAssumptionEGH}
\begin{align}
	\label{eq:filterAssumptionGH1}
	| 1 - \phi_E(\tfrac 1 2 z) |
	& \leq C_9 | z | \\
	\label{eq:filterAssumptionGH2}
	| \sinc^2(\tfrac 1 2 z) - \tfrac 1 2 (\cos(z) + 1) \psi_E(\tfrac 1 2 z)) |
	& \leq C_{10} | \sin(\tfrac 1 2 z) | \\
	\label{eq:filterAssumptionGH3}
	| \sinc(z) - \phi_E(\tfrac 1 2 z) |
	& \leq C_{11} | z \sin(\tfrac 1 2 z) | \\
	\label{eq:filterAssumptionGH4}
	| \sinc^2(z) - \tfrac 1 2 (\cos(z) + 1) \psi_E(\tfrac 1 2 z) |
	& \leq C_{12} | \sin(z) \sin(\tfrac 1 2 z) | \\
	\label{eq:filterAssumptionGH5}
	| \sinc^2(z) - \tfrac 1 2 (\cos(z) + 1) \psi_E(\tfrac 1 2 z) \cos(z) |
	& \leq C_{13} | \sin(z) \sin(\tfrac 1 2 z) |
\end{align}
\end{subequations}
Assumptions \eqref{eq:filterAssumptionPsi_Sinc} and \eqref{eq:filterAssumptionPsiBounded} yield
\begin{equation*}
	| \eta(z) | \leq \max\{ 2 C_4, C_7 \},
\end{equation*}
for $\eta \in \{ \phi, \psi, \psi_0, \psi_1 \}$
which is~\cite[Condition (11)]{GriH06}.
The new assumption~\eqref{eq:filterAssumptionGH1} yields
\begin{equation*}
	| (\phi(z) - 1) | \leq C_9 | z |,
\end{equation*}
which is~\cite[Condition (12)]{GriH06}.
\eqref{eq:filterAssumptionGH2} yields
\begin{equation*}
	| (\sinc^2(\tfrac 1 2 z) - \psi(z)) | \leq C_{10} | \sin(\tfrac 1 2 z) |,
\end{equation*}
which is \cite[Condition (13)]{GriH06}.
Filter Assumptions~\eqref{eq:filterAssumptionGH3}, \eqref{eq:filterAssumptionGH4} and \eqref{eq:filterAssumptionGH5} yield
\begin{equation*}
	| (\sinc(z) - \chi(z)) | \leq C_{13} | z \sin(\tfrac 1 2 z) |
\end{equation*}
for $\chi = \phi, \psi_0, \psi_1$, which is \cite[Condition (14)]{GriH06}.
\cite[Condition (11) to (14)]{GriH06} are sufficient for the second order estimate of the solution (without the derivative) in \cite[Theorem 1]{GriH06}, which is all we need.

Our proposed filter choice \eqref{eq:filterChoiceNew} in addition to the filter conditions~\eqref{eq:filterAssumptionsE} also fulfill the new 
filter conditions~\eqref{eq:filterAssumptionEGH}, \eqref{eq:filterAssumptionGH4} holds true with $C_{12} = 0$. This implies that scheme~\eqref{eq:timeIntegrationScheme} with $\psi_B \equiv \phi_B \equiv 1$ and \eqref{eq:filterChoiceNew} is of second order also for multiple high frequencies in $\OOmega$.


\begin{remark}
\cite[Theorem 1]{GriH06} of Grimm and Hochbruck requires the non-linearity and its derivatives
$g$, $g_y$ and $g_{yy}$ to be bounded globally. This would exclude our $g(\EE)= \GG \EE$, which is linear and thus unbounded.
An inspection of the proof however reveals that $g$ has only to be bounded on the solution $\EE(t)$ and
on $\phi \EE(t)$, such that it is sufficient that $g$ is bounded on a ball.
\end{remark}

\section{Numerical experiments}
\subsection{Laser plasma interaction -- triple splitting}
\label{sec:numexp}
As illustration of the convergence result we setup an experiment as also shown in \cite{TuePLH10}.
The settings are taken from the thin foil experiment above in Section \ref{sec:introPhysics}.

We use the laser pulse from \eqref{eq:laserPulse}
as initial value for the fields and zero initial impulses.
In vacuum, this models a laser pulse propagating only in $x$-direction.
We assume a domain which is homogeneous in $y$ and $z$ direction such that the continous equations \eqref{eq:maxwellsEquationsLaserReflect} simplify to
\begin{subequations}
\label{eq:maxwellsEquations1d}
\begin{align}
	 \partial_t \pp_y & = \EE_y, \\
	 \partial_t \EE_y & = -\partial_x \BB_z - f^2\rho \pp_y, \\
	 \label{eq:maxwellsEquations1dBupdate}
	 \partial_t \BB_z & = -\partial_x \EE_y
\end{align}
\end{subequations}
with periodic boundary conditions.
The density profile is chosen as
\begin{equation}
  \label{eq:rhoLaserReflect}
  \rho(x) =
  \begin{cases}
    \rho_{F},\quad & \text{if} ~ x \in F, \\
    0, \quad & \text{otherwise},
  \end{cases}
\end{equation}
where $F$ is the area covered by the foil.
Spatial discretization is done with finite forward differences for the space dertivative of $\BB$-field and backwards differences for the $\EE$-field.
This corresponds to the Yee grid to the one-dimensional situation.
Assumption~\ref{ass:physicalAndDiscretizationAssumptions} is satisfied.
The bounds of Assumption~\ref{ass:initialData} are also statisfied, exploiting that $|\EE|$ and $|\BB|$ are smaller than machine precision and thus the error of setting them to zero in the foil $F$ is not larger then the round-off error when evaluating the exponential function numerically.

We show the error in $\EE$, $\pp$ and $\BB$. The error in $\EE$ dominates the error in $\BB$ by 
almost one magnitude. The error in the impulses $\pp$ almost coincides with the error in the electric field $\EE$ if no filters are used. If the filter choice~\eqref{eq:filterChoiceJalo} is employed
the error in $\EE$ and $\pp$ coincide away from even multiples of $\pi$. Thus the peaks in this
case are in the error of $\EE$ only.
The left column in Fig.~\ref{fig:ts} shows the error of the method for $\rho_F = 64\cdot 10^8, \omega = 8\cdot 10^4$ and the right column corresponds to with $\rho_F = 9\cdot 10^{6}, \omega = 3\cdot 10^{3}$.
We show the euclidean norm of the absolute error at $T=20$ versus step size $\tau$ for
the numerical solution of~\eqref{eq:timeIntegrationScheme} measured against 
the spatially discrete reference solution~\eqref{eq:maxwellsEquationsDiscrete} calculated
with the \texttt{expmv} routine from~\cite{MohH11}. In the upper row no filter functions were used, resulting in large broad error peaks. In the middle row the filter choice~\eqref{eq:filterChoiceJalo} results in very sharp error peaks around even multiples of $2\pi/\omega$. 
As predicted by our theory the bottom row shows second order convergence independent of $\omega$. 
For the zoom the range of step sizes is $\tau \in [0.923 \cdot 2\pi/\omega, 1.075 \cdot 2\pi/\omega]$ if no filter function is used and it is much smaller if a filter function is chosen, i.e $\tau \in [0.997 \cdot 2\pi/\omega, 1.003 \cdot 2\pi/\omega]$.
\begin{figure}[tbh]
  \centering
  \includegraphics[width=0.49\textwidth]{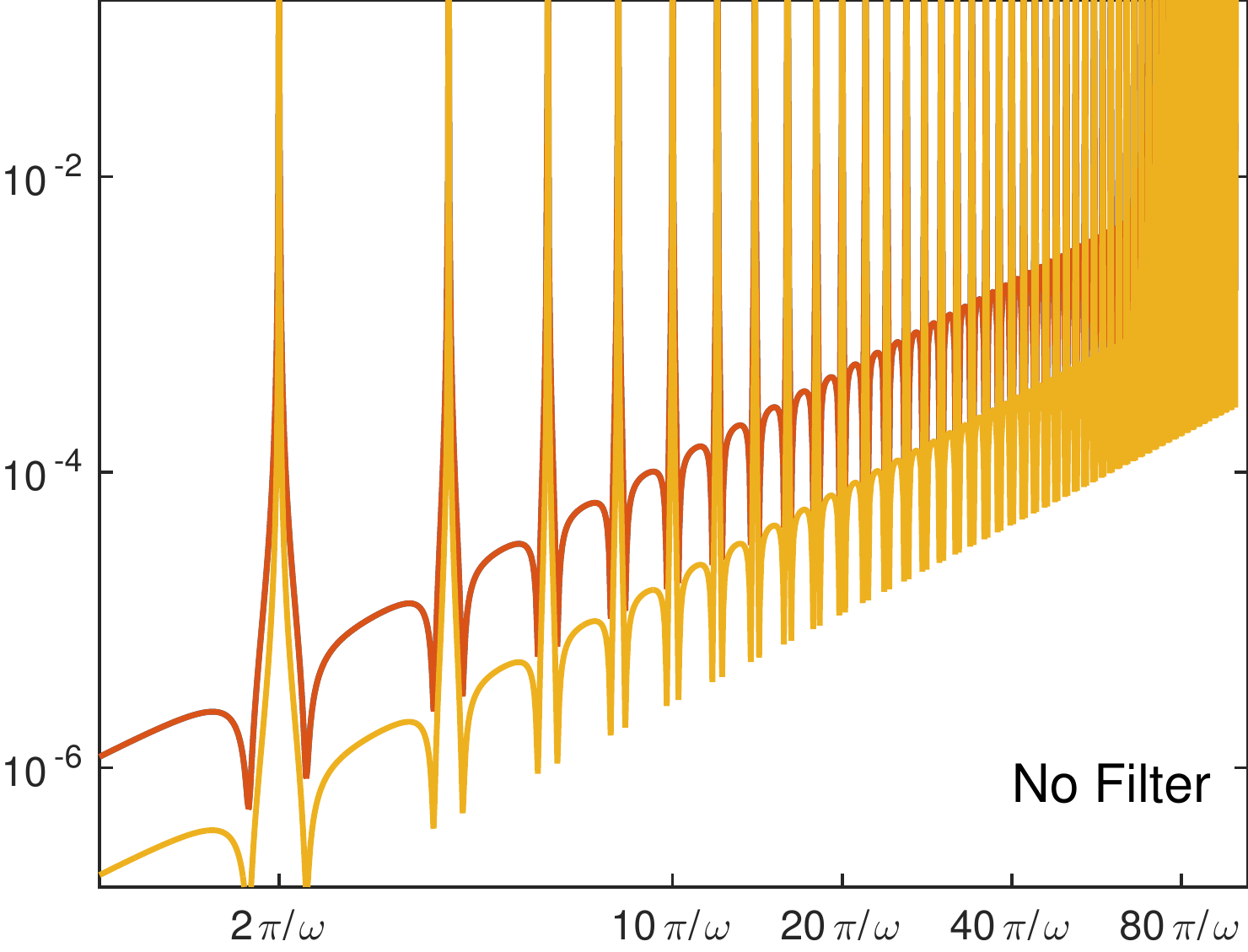}
  \includegraphics[width=0.49\textwidth]{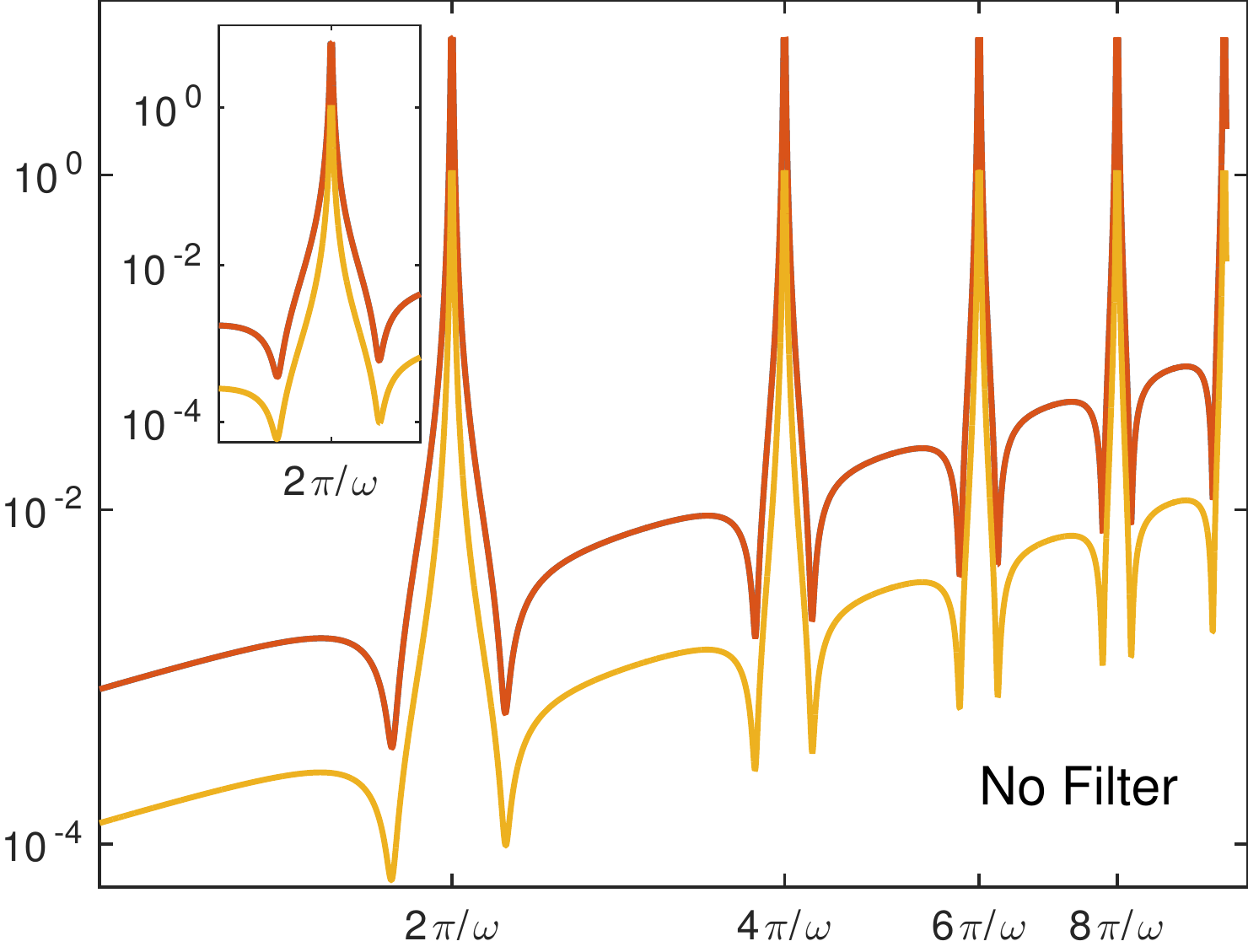}
  \\
  \includegraphics[width=0.49\textwidth]{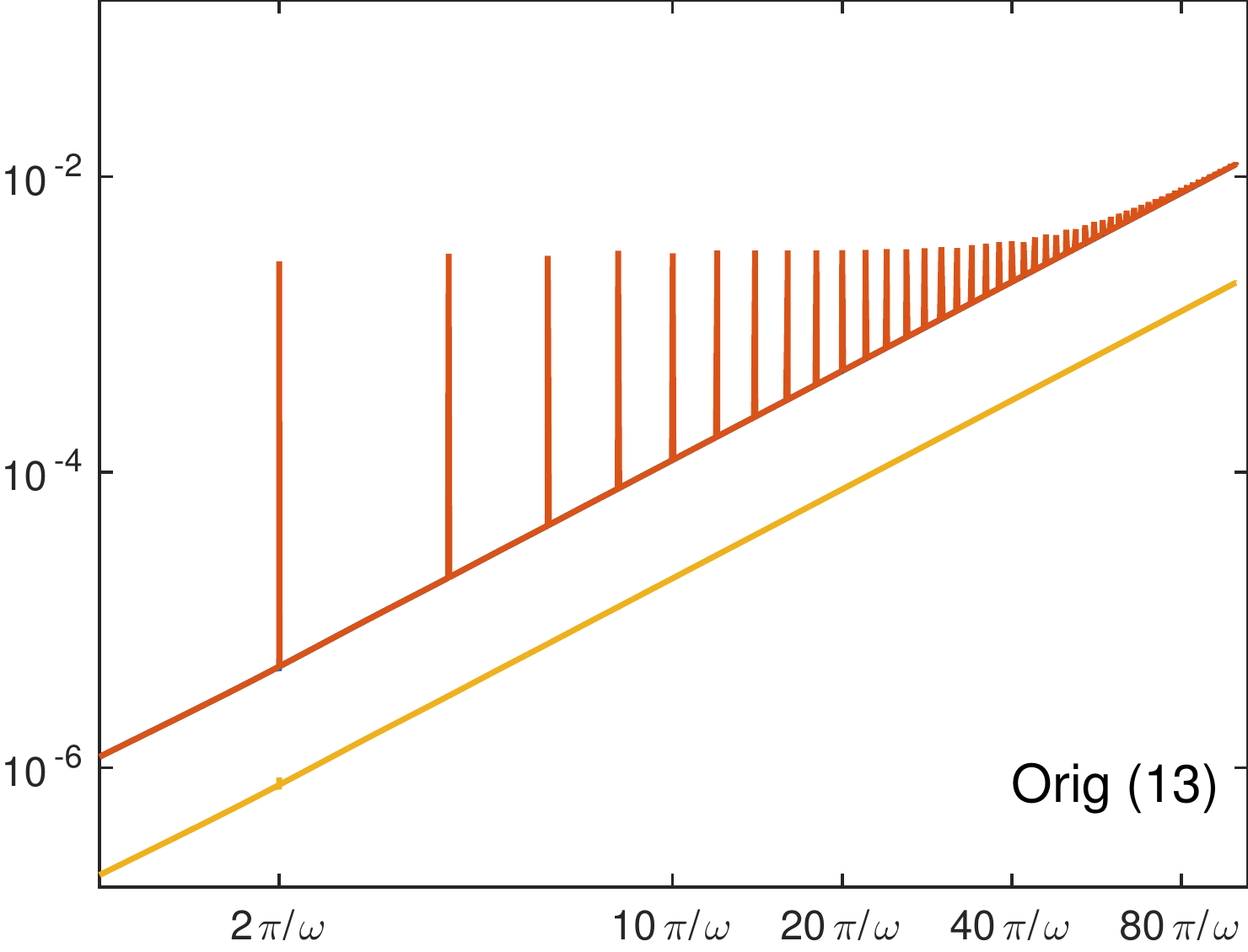}
  \includegraphics[width=0.49\textwidth]{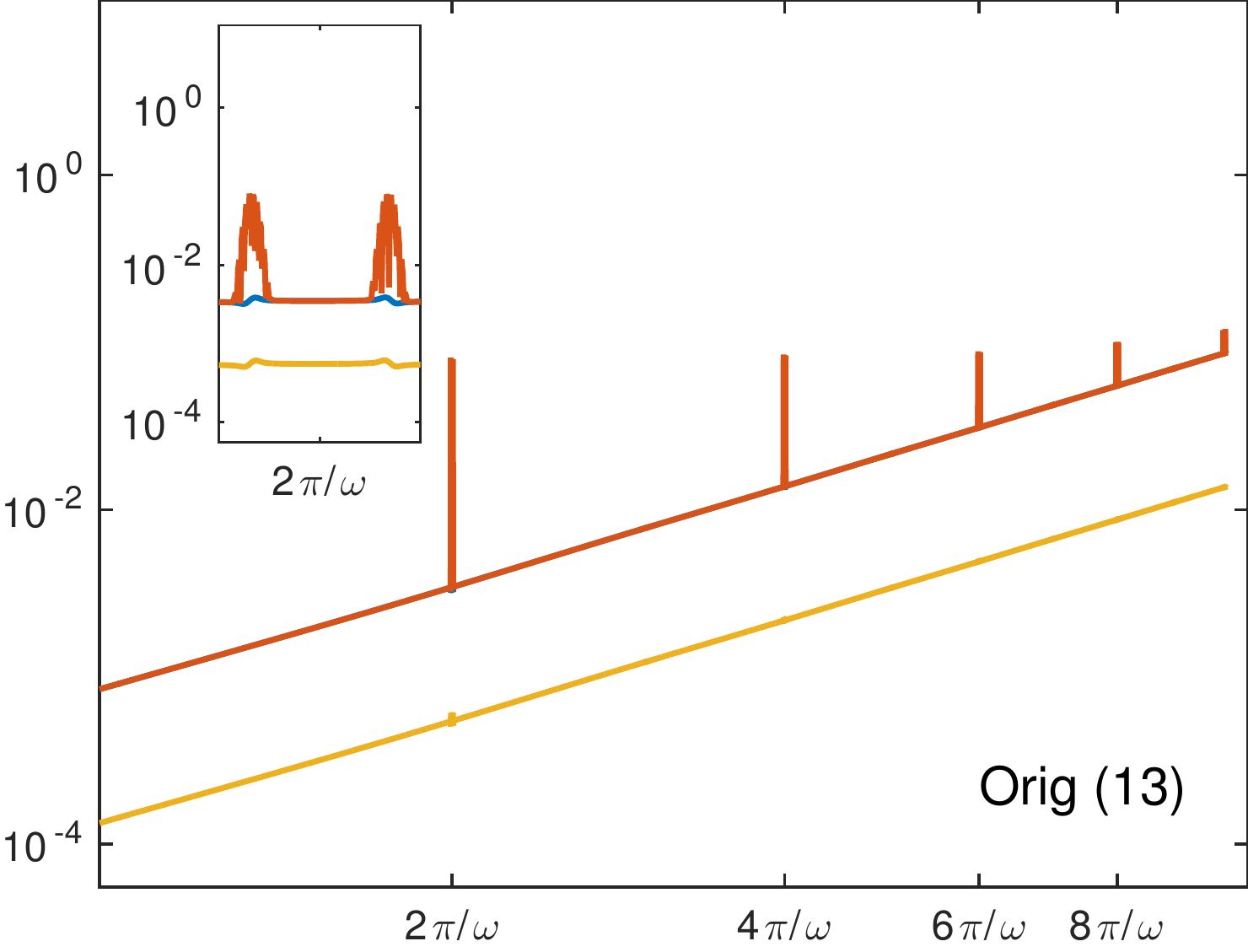}
  \\
  \includegraphics[width=0.49\textwidth]{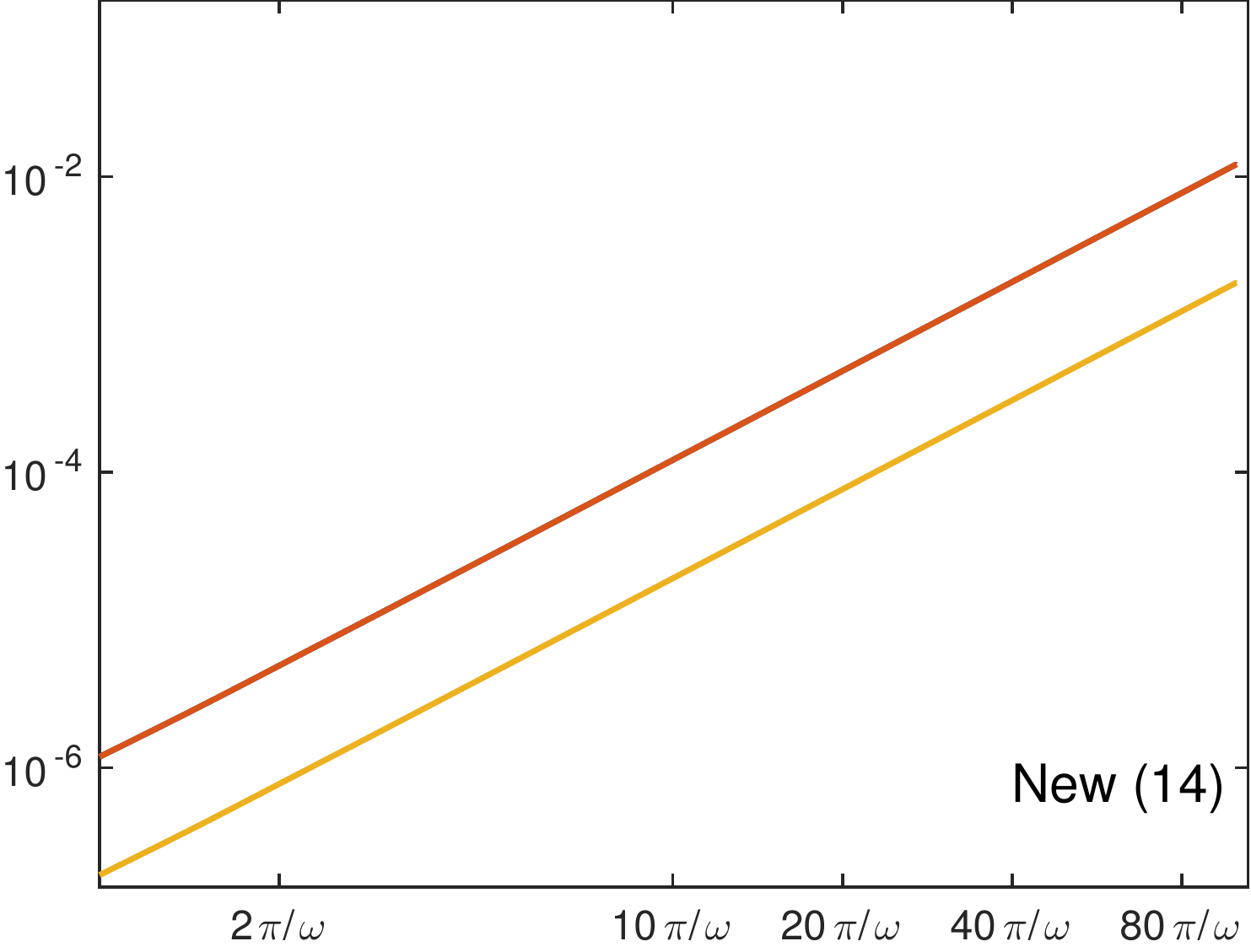}
  \includegraphics[width=0.49\textwidth]{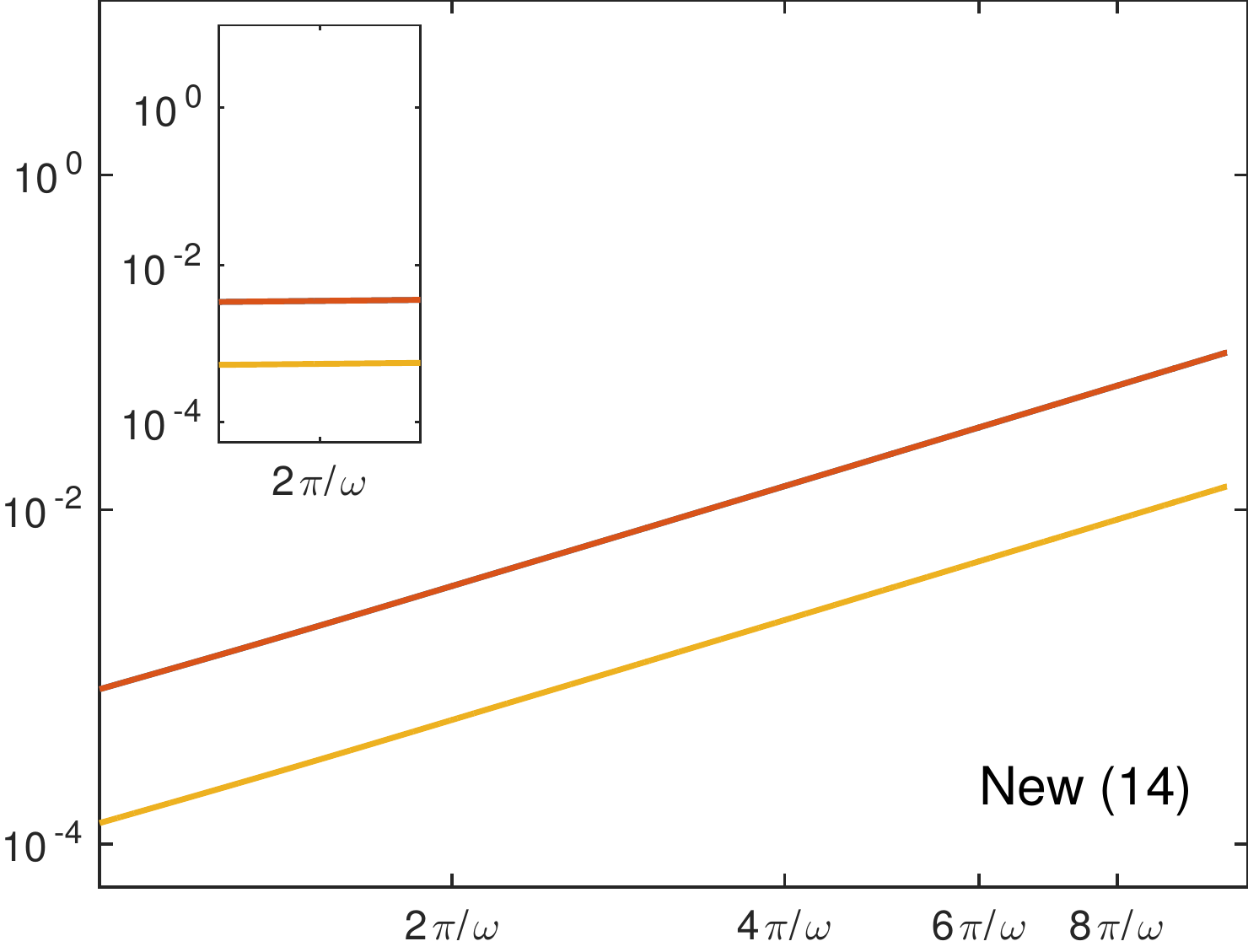}
  \caption{Euclidean norm of the absolute error of $\EE$, $\pp$ and $\BB$
    at $T=20$ versus step size for
    the numerical solution~\eqref{eq:timeIntegrationScheme} measured against 
    the spatially discrete reference solution of~\eqref{eq:maxwellsEquationsDiscrete}.
    Left: $\omega = 8\cdot 10^4$; Right: $\omega = 3\cdot 10^{3}$; Top: No filter;
    Middle: Filter of~\cite{TuePLH10}; Bottom: New filter choice~\eqref{eq:filterChoiceNew}. }
  \label{fig:ts}

\end{figure}

\subsection{Klein-Gordon type equation -- two step method}
\label{sec:hlw}
We consider a one-dimensional Klein-Gordon type equation for one component of 
the electric field with periodic boundary conditions on the interval $[-10, 14]$, where 
the plasma occupies the region $(10,11)$. This equation is obtained by eliminating $\BB$ and $\pp$ from~\eqref{eq:maxwellsEquationsLaserReflect}.
Discretization in space is by symmetric second order finite differences on the equidistant 
grid $x$ with grid points $x_j = -10+j h$, $j=1,\dots,N$, with $N=240$ and spacing 
$h = 24/N$. The initial value $\EE_0$ is given by~\eqref{eq:laserPulse} ($\bar{x}=0$, $\sigma_0=10$) 
evaluated on the grid and initial velocity by 
$(\dot{\EE}_0)_j =  ( (\frac{2 \pi}{\sigma_0})^2 x_j  \cos(2\pi x_j)+ 2\pi \sin(2\pi x_j) )\mathrm{e}^{-\frac{2\pi^2}{\sigma_0^2} x_j^2}$.
That is we solve for $\EE(t)$
\begin{equation}
  \label{eq:KG}
  \begin{aligned}
    \partial_{tt}\EE(t) &= \GG \EE(t) - \OOmega \EE(t) \mbox{ for } t \in [0,3]
    \\
    \EE(0) &= \EE_0, \quad \partial_t \EE(0) = \dot{\EE}_0 
  \end{aligned}
\end{equation}
with, using Matlab notation, matrices
$ \GG = \mathrm{spdiags}([e, -2*e, e],-1:1,N,N)/h^2;$ $\GG(1,N) = 1/h^2;$ $\GG(N,1) = 1/h^2;$ for 
a vector $e$ with all ones and $f = 0*x;$ $f(x<11\ \&\ x>10) = 1;$ $\OOmega = \mathrm{diag}(\omega*f)$.
with $\omega = 9\cdot 10^3$. 

We have implemented the two step method from~\cite[XIII.2.2]{HaiLW06}
with even real-values filter functions $\psi$ and $\phi$, with $\psi(0)=\phi(0)=1$.
\begin{align*}
  &\mathrm{(A)} &\ \psi(z) &= \sinc^2(\tfrac 1 2 z) & \phi(z) &= 1 & \mbox{ Gautschi  \cite{Gautschi61}}
  \\
  &\mathrm{(B)} &\ \psi(z) &= \sinc(z) & \phi(z) &= 1 & \mbox{ Deuflhard  \cite{Deuflhard79}}
  \\
  &\mathrm{(C)} &\ \psi(z) &= \sinc(\tfrac 1 2 z) \phi(z) & \phi(z) &= \sinc(z) & \mbox{ Garcia-Archila et al. \cite{Garcia-Archilla1998}}
  \\
  &\mathrm{(D)} &\ \psi(z) &= \sinc^2(\tfrac 1 2 z) & \phi(z) & \mbox{ from } \eqref{eq:phiHocL} \mbox{ below } & \mbox{ Hochbruck, Lubich \cite{HocL99}}
  \\
  &\mathrm{(E)} &\ \psi(z) &=  \sinc^2(z) & \phi(z) &= 1 & \mbox{ Hairer, Lubich  \cite{Hairer2001}}
  \\
  &\mathrm{(F)} &\ \psi(z) &= \eta(z)\sinc(z)^2 & \phi(z) &= \sinc(\tfrac 1 2 z) & \eqref{eq:filterChoiceNew}
  \\
  &\mathrm{(G)} &\ \psi(z) &= \eta(z)\sinc(z) & \phi(z) &= \sinc(\tfrac 1 2 z) &  
  \eqref{eq:filterChoiceJalo}~\mbox{ Liljo et al.  \cite{TuePLH10}}
  \\
  &\mathrm{(H)} &\ \psi(z) &= \sinc(\tfrac 1  2 z) & \phi(z) &= \sinc(z) & 
  \\
  &\mathrm{(I)} &\ \psi(z) &=  \sinc(z) & \phi(z) &=\sinc(\tfrac 1 2 z), & 
\end{align*}
where 
\begin{equation}
	\label{eq:phiHocL}
	\phi(z) = \sinc(z)(1 + \tfrac 1 3 \sin^2(\tfrac 1 2 z))
\end{equation}
in method (D). The alphabetic labels for methods (A) - (E) follow the convention of~\cite{HaiLW06}.
Method (F) corresponds to our choice, \eqref{eq:filterChoiceNew}, with 
$\eta(z) := \tfrac 1 2 (1+\cos(z))$ coming for free from the triple splitting. 
Method (G) corresponds to the choice~\eqref{eq:filterChoiceJalo} considered in~\cite{TuePLH10,Lil10}.

Figure~\ref{fig:hlw} shows the norm of the absolute error in 
euclidean norm versus the step size.
For this linear test problem method (E) shows the same behavior as (A), 
the behavior of (D) is similar to (C) and (H) is similar (I), therefore results
for (E), (D) and (H) are not displayed. The inset is a zoom to step sizes
around $2\pi/\omega$ showing the error for  
$\tau \in [(2\pi-5\cdot 10^{-3})/\omega, (2\pi+5\cdot 10^{-3})/\omega]$. 
For this linear test problem one observes second order convergence as soon as there
is a double zero of $\psi$ at even multiples of $2\pi$. The condition on $\phi$ 
seems to be less important. However comparing (I) and the method (B) 
it is observed that the resonance peak is much sharper for (I), reflecting the influence of
$\phi$ in this test problem. Though the filter functions
of methods (G), (H) and (I) satisfy the assumptions for first order convergence uniformly 
in $h\omega$ as predicted by~\cite[Theorem XIII.4.1]{HaiLW06}, c.f. Remark~\ref{rem.2}, sharp 
resonance peaks are observed. Currently we suspect a mistake in the proof of the
Theorem XIII.4.1 there.

\begin{figure}
  \centering
  \includegraphics[width=0.49\textwidth]{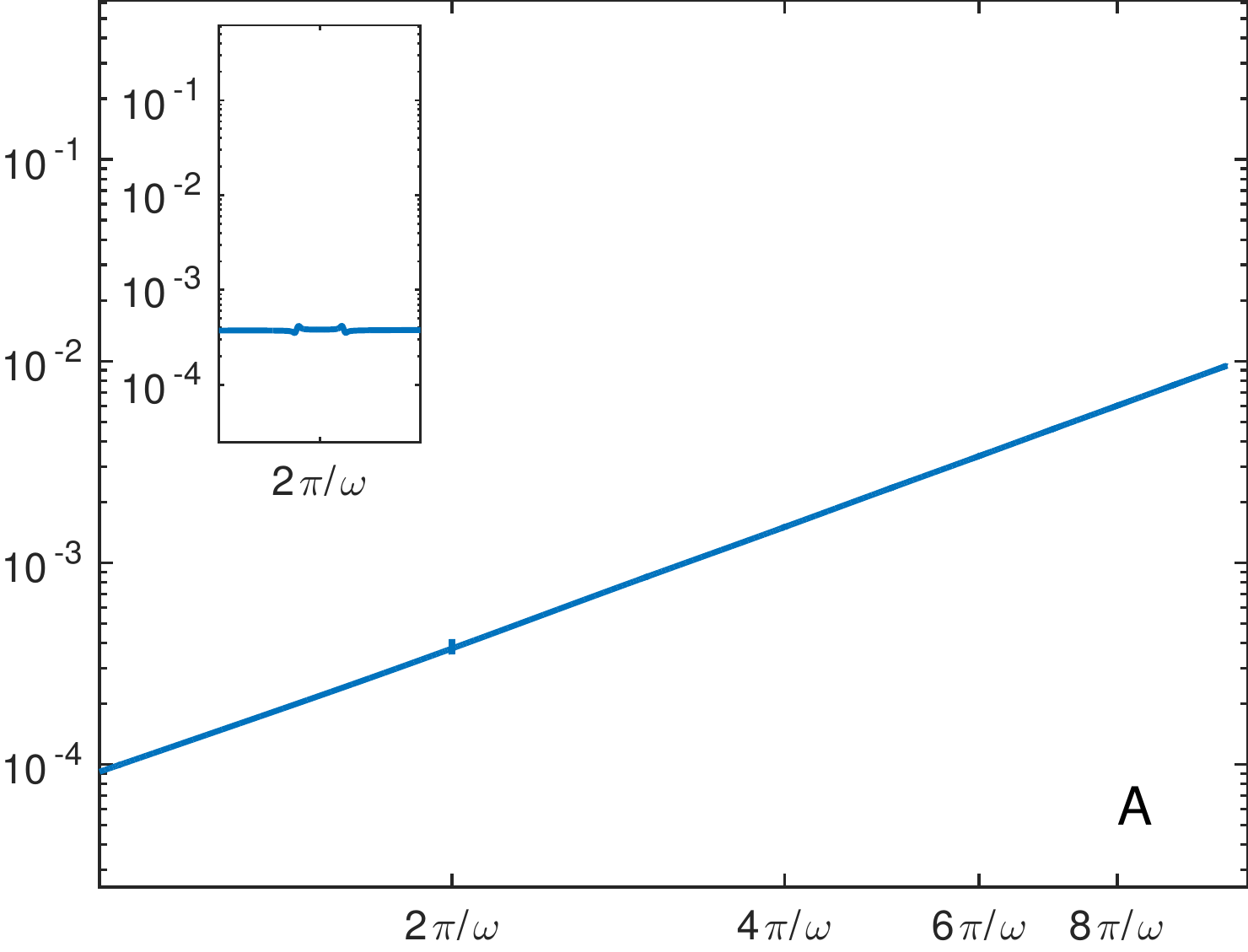}
  \includegraphics[width=0.49\textwidth]{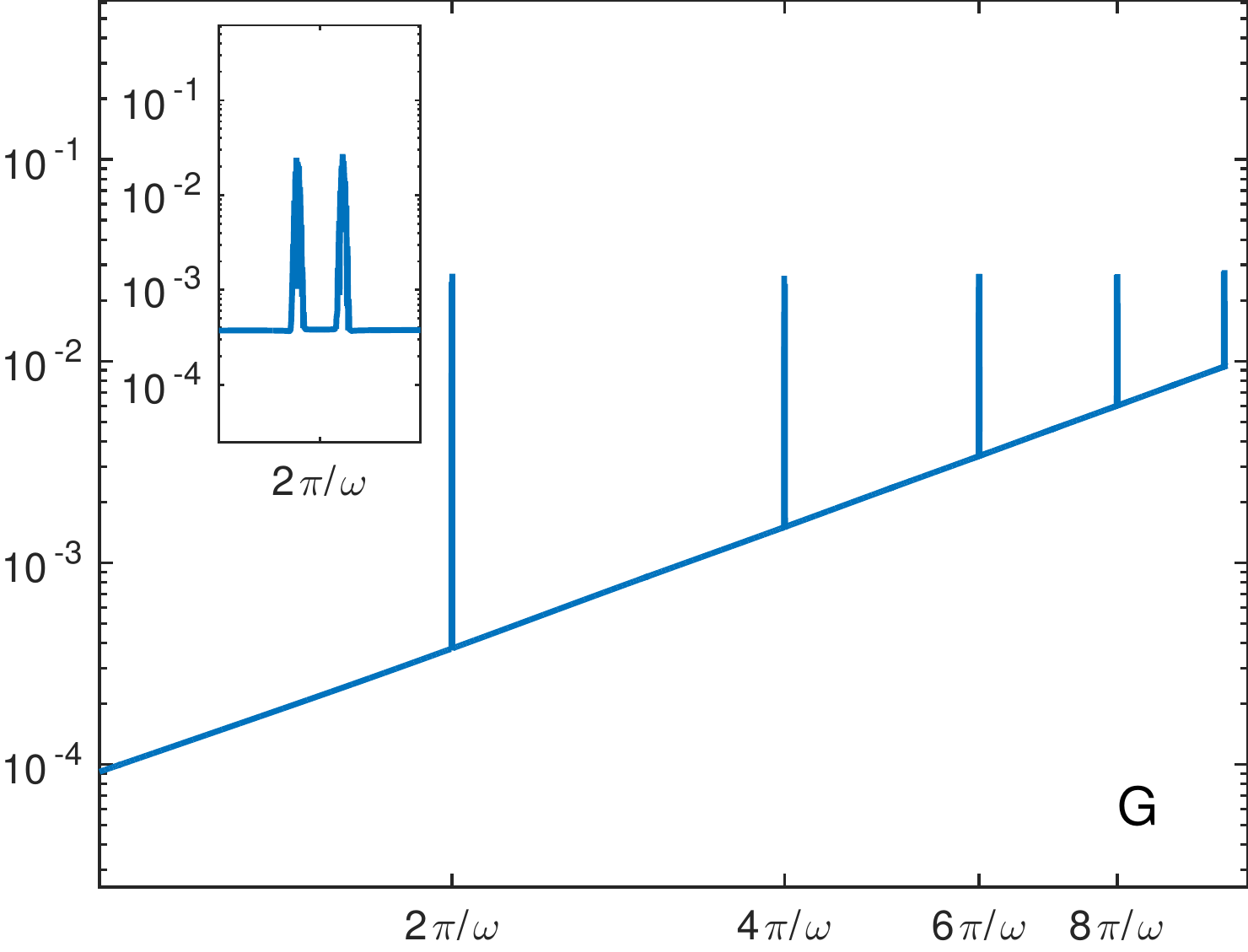}
  \\
  \includegraphics[width=0.49\textwidth]{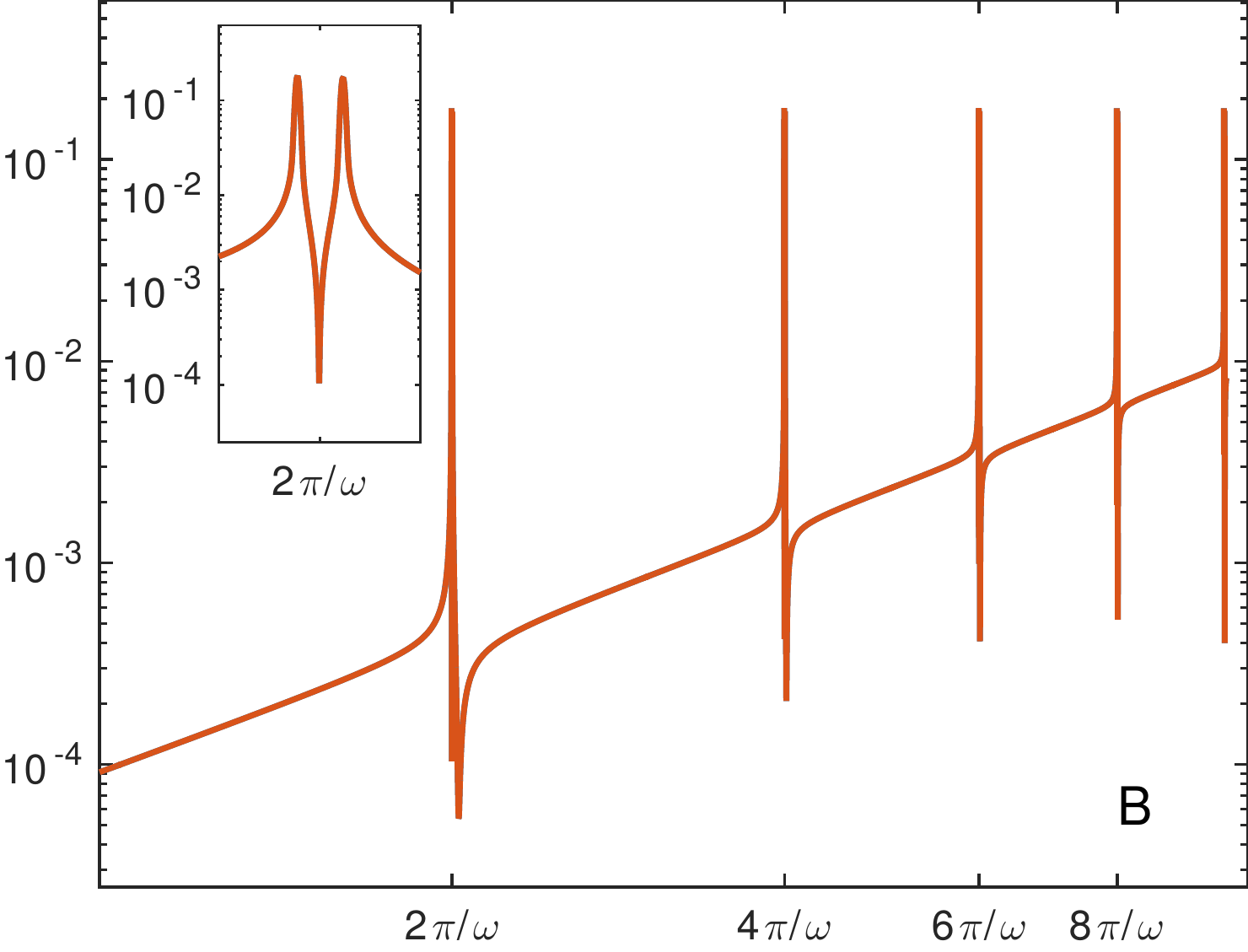}
  \includegraphics[width=0.49\textwidth]{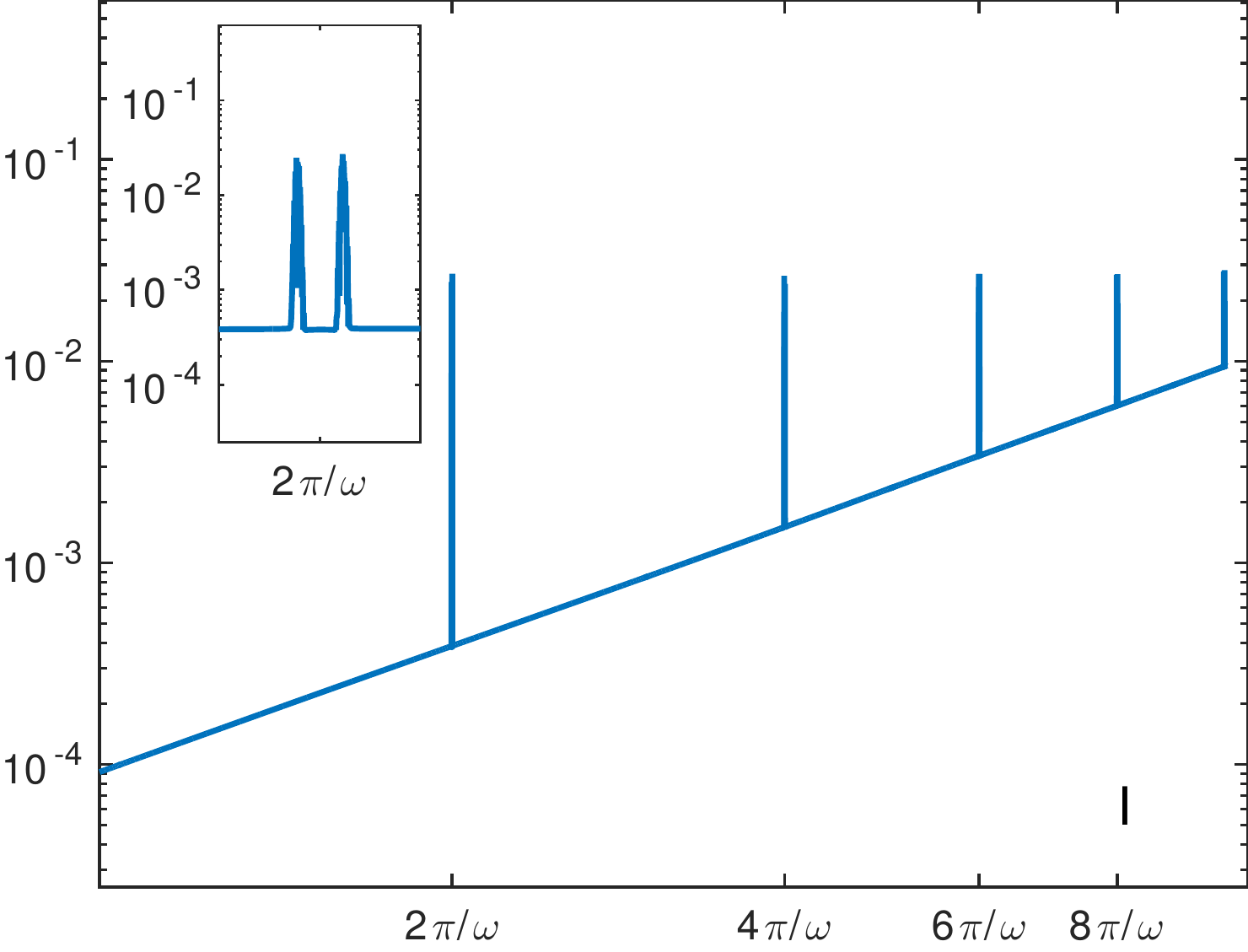}
  \\
  \includegraphics[width=0.49\textwidth]{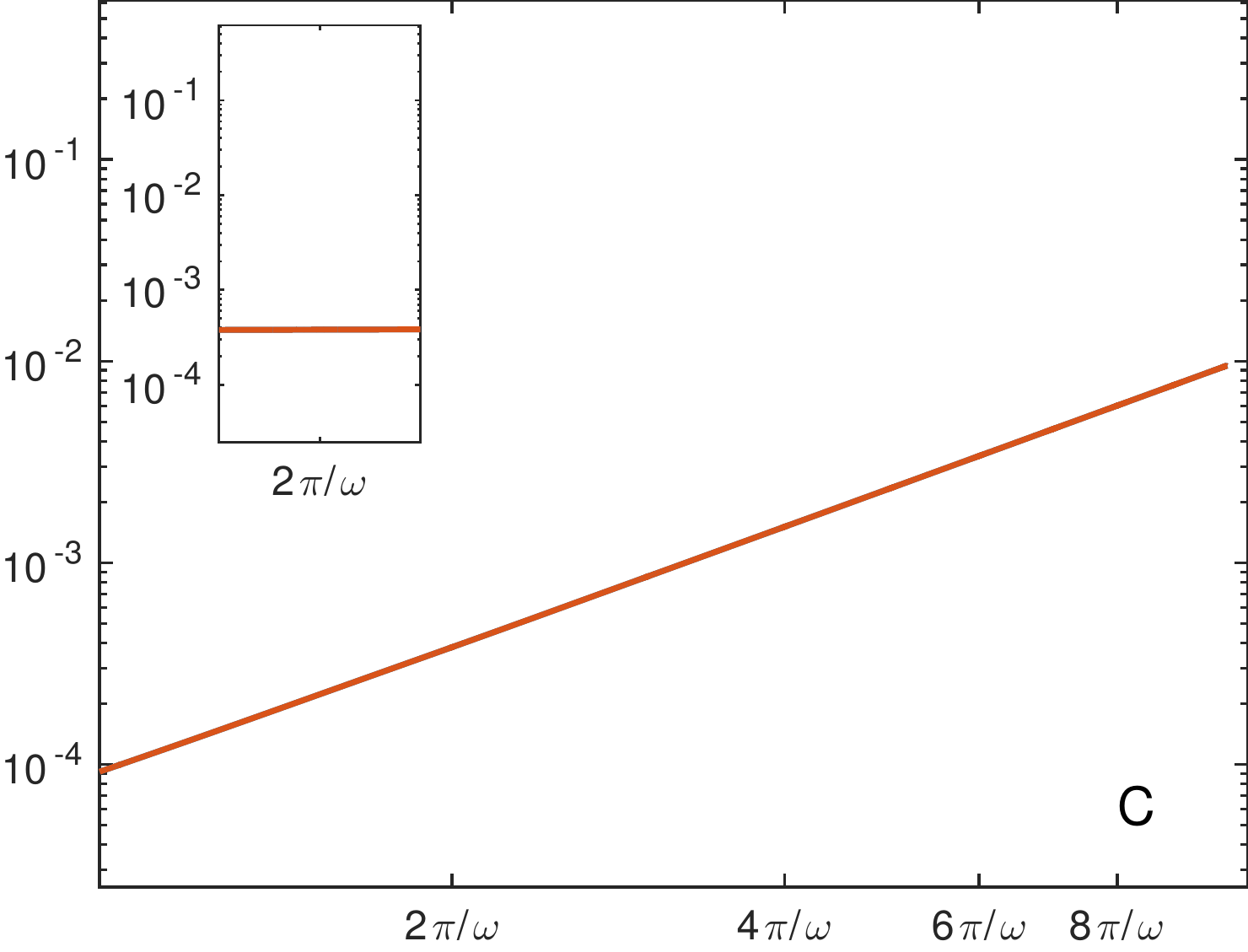}
  \includegraphics[width=0.49\textwidth]{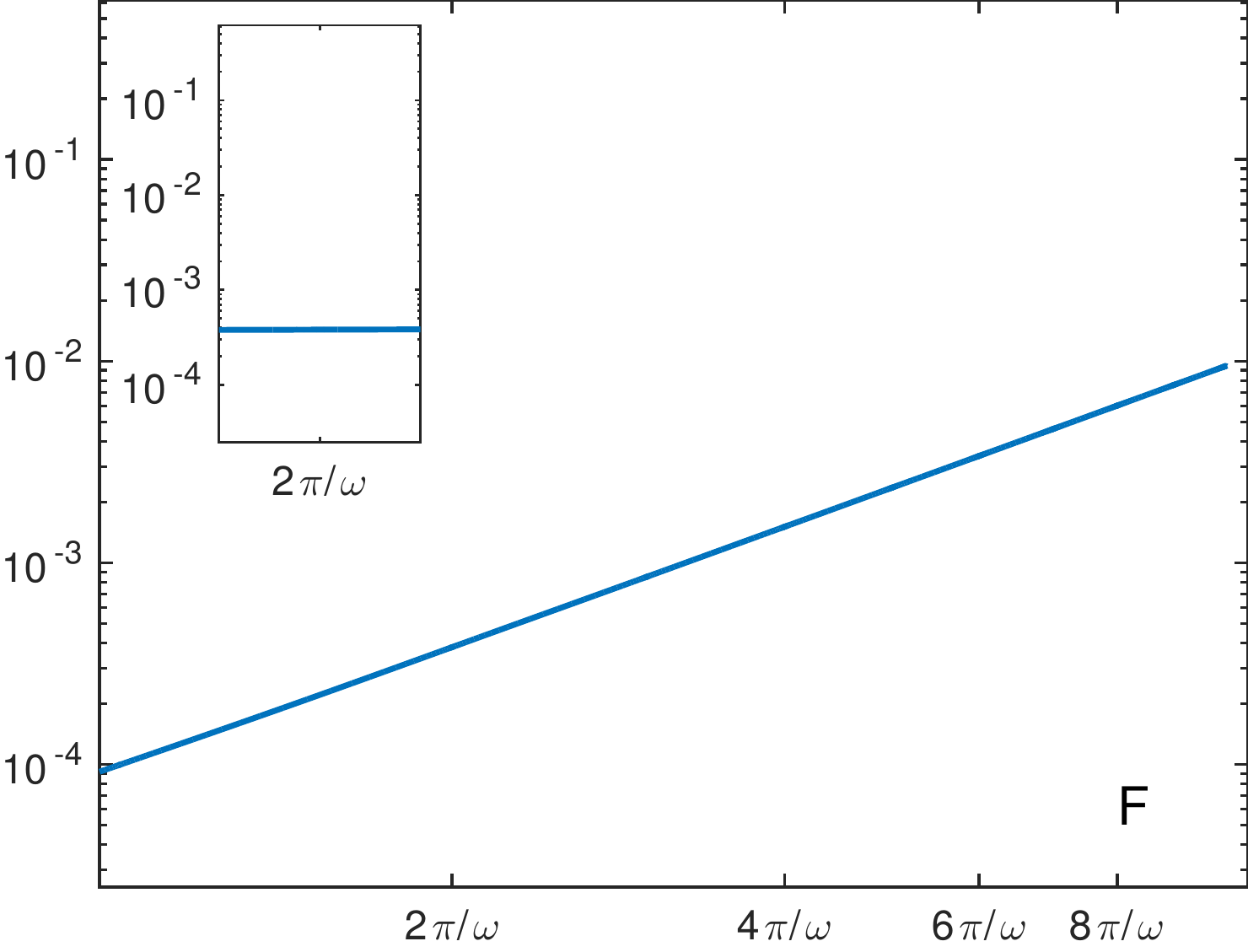}
  \caption{Euclidean norm of the absolute error at $T=3$ versus step size for the Klein-Gordon type equation
    \eqref{eq:KG}. Results are shown
    for filter choices (A), (B), (C), (G), (I) and (F). The inset shows a zoom to step-sizes
  in the interval $ [(2\pi-5\cdot 10^{-3})/\omega, (2\pi+5\cdot 10^{-3})/\omega]$. }
  \label{fig:hlw}
\end{figure}
\section*{Acknowledgments}
We thank Ernst Hairer and Ludwig Gauckler for answering all our questions when 
we were carefully checking the proof of Theorem~\ref{thm:hlwAdoption} and the relevant 
part of~\cite[Theorem XIII.4.1]{HaiLW06}. We thank
Marlis Hochbruck and Volker Grimm for their interest in our work and 
stimulating discussions.

\bibliographystyle{plain}
\bibliography{main}

\begin{thebibliography}{10}

\bibitem{MohH11}
Al-Mohy A.W. and Higham~N. J.
\newblock Computing the action of the matrix exponential, with an application
  to exponential integrators.
\newblock {\em SIAM Journal on Scientific Computing}, 33(2):488--511, 2011.

\bibitem{BucH15}
S.~Buchholz and M.~Hochbruck.
\newblock Error analysis of hybrid particle-in-cell ({PIC}) methods for
  oscillatory {M}axwell-like equations.
\newblock Book of Abstracts, 12th Int. Conf. on math. and numer. aspects of
  wave propagation ({WAVES} 2015, {K}arlsruhe, {KIT}, {G}ermany), 2015.

\bibitem{Deuflhard79}
P.~Deuflhard.
\newblock A study of extrapolation methods based on multistep schemes without
  parasitic solutions.
\newblock {\em Zeitschrift für angewandte Mathematik und Physik ZAMP},
  30(2):177--189, 1979.

\bibitem{Garcia-Archilla1998}
B.~Garcia-Archilla, J.M. Sanz-Serna, and R.D. Skeel.
\newblock Long-time-step methods for oscillatory differential equations.
\newblock {\em SIAM Journal on Scientific Computing}, 20(3):930--963, 1998.

\bibitem{Gautschi61}
W.~Gautschi.
\newblock Numerical integration of ordinary differential equations based on
  trigonometric polynomials.
\newblock {\em Numerische Mathematik}, 3(1):381--397, 1961.

\bibitem{GriH06}
V.~Grimm and M.~Hochbruck.
\newblock Error analysis of exponential integrators for oscillatory
  second-order differential equations.
\newblock {\em Journal of Physics A: Mathematical and General}, 39:5495--5507,
  2006.

\bibitem{Hairer2001}
E.~Hairer and Ch. Lubich.
\newblock Long-time energy conservation of numerical methods for oscillatory
  differential equations.
\newblock {\em SIAM Journal on Numerical Analysis}, 38(2):414--441, 2001.

\bibitem{HaiLW06}
E.~Hairer, Ch. Lubich, and G.~Wanner.
\newblock {\em Geometric numerical integration}, volume~31 of {\em Springer
  Series in Computational Mathematics}.
\newblock Springer-Verlag, Berlin, second edition, 2006.
\newblock Structure-preserving algorithms for ordinary differential equations.

\bibitem{Hip02}
R.~Hiptmair.
\newblock Finite elements in computational electromagnetism.
\newblock {\em Acta Numer.}, 11:237--339, 2002.

\bibitem{HipKT12}
R.~Hiptmair, P.~R. Kotiuga, and S.~Tordeux.
\newblock Self-adjoint curl operators.
\newblock {\em Annali di Matematica Pura ed Applicata}, 191(3):431--457, 2012.

\bibitem{HocL99}
M.~Hochbruck and Ch. Lubich.
\newblock A {G}autschi-type method for oscillatory second-order differential
  equations.
\newblock {\em Numerische Mathematik}, 83(3):403--426, 1999.

\bibitem{Jan15}
G.~Jansing.
\newblock {\em Exponentielle Integratoren -- Zeitintegrationsverfahren für
  Maxwell-Gleichungen und parabolische Systeme}.
\newblock Dissertation, Heinrich-Heine Universität Düsseldorf, 2015.

\bibitem{Lil10}
J.~Liljo.
\newblock {\em Hybride Verfahren zur Simulation der Wechselwirkung
  relativistischer Kurzpuls-Laser mit hochdichten Plasmen}.
\newblock Dissertation, Heinrich-Heine Universität Düsseldorf, 2010.

\bibitem{Tue13}
T.~Tückmantel.
\newblock {\em Hybrid particle-in-cell simulations of relativistic plasmas}.
\newblock Dissertation, Heinrich-Heine Universität Düsseldorf, 2013.

\bibitem{TuePLH10}
T.~Tückmantel, A.~Pukhov, J.~Liljo, and M.~Hochbruck.
\newblock Three-dimensional relativistic particle-in-cell hybrid code based on
  an exponential integrator.
\newblock {\em IEEE Transactions on Plasma Science}, 38(9):2383--2389, 2010.

\end{thebibliography}

\end{document}